\newtheorem{thm}{Theorem}[section]
\newtheorem{definition}[thm]{Definition}
\newtheorem*{hypo*}{Hypothesis}
\newtheorem*{problem*}{Problem}
\newtheorem{lemma}[thm]{Lemma}
\newtheorem{prop}[thm]{Proposition}
\newtheorem{rem}[thm]{Remark}
\renewcommand {\theequation}{\thesection.\arabic{equation}}
\def\ep{{\epsilon}}
\def\la{{\lambda}}\def\si{{\sigma}}
\def\Om{{\Omega}}
\def\th{{\theta}}\def\Th{{\Theta}}
\def\<{\left<}\def\>{\right>}\def\({\left(}\def\){\right)}
\font\tenmsbm=msbm10\textfont
\font\sevenmsbm=msbm7
\def\bb#1{{\fam\msbmfam #1}}
\def\EE{\bb E}\def\FF{\bb F}\def\GG{\bb G}\def\HH{\bb H}
\def\RR{\bb R}
\def\cF{{\cal F}}\def\cG{{\cal G}}
\def\cM{{\cal M}}
\def\cP{{\cal P}}\def\cU{{\cal U}}
\title{\large \bf  Stochastic filtering under model ambiguity}
		\author[1,2]{Jiaqi Zhang\thanks{Email:{\tt 11849459@mail.sustech.edu.cn}.}}
		\author[2,3]{Jie Xiong\thanks{Supported by National Key R\&D Program of China grant (No.2022YFA1006102) and NSFC grant (No.11831010). Email: {\tt xiongj@sustech.edu.cn}.}}
		\date{}
		\affil[1]{\footnotesize Department of Mathematics, Harbin Institute of Technology, Harbin, 150001, China}
		\affil[2]{\footnotesize Department of Mathematics, Southern University of Science and Technology, Shenzhen, Guangdong, 518055, China}
		\affil[3]{\footnotesize SUSTech International Center for Mathematics, Southern University of Science and Technology, Shenzhen, Guangdong, 518055, China}
\begin{document}
			\setlength{\baselineskip}{12pt}
			\maketitle

		\begin{abstract}
			In this paper, we study a non-linear filtering problem in the presence of signal model uncertainty. The model ambiguity is characterized by a class of probability measures from which the true one is taken. After interchanging the order of extremum problems by using the mini-max theorem, we find that the uncertain filtering problem can be converted to a weighted conditional mean-field optimal control problem.
			Further, we characterize the ambiguity filter and prove its unique existence.
		\end{abstract}

		\bf Keywords. \rm  Ambiguity, nonlinear filtering, drift uncertainty, mini-max theorem, weighted mean-field system
		
		\bf AMS Mathematics subject classification. \rm Primary: 60F15;
		Secondary: 28A12, 60A10

	\section{Introduction}
	\setcounter{equation}{0}
	\renewcommand{\theequation}{\thesection.\arabic{equation}}
	
	Originally motivated by its application in telecommunications, stochastic filtering has been studied extensively since the early work of \citet{Stratonovich1960,Stratonovic1966} and \citet{Kushner1964,Kushner1967}.
	The celebrated paper \citet{Fujisaki1972} marked the pinnacle of the innovative approach to non-linear filtering of diffusion processes. The optimal filtering equation is a non-linear stochastic partial differential equation (SPDE), which is usually called the Kushner–Stratonovich equation or the Kushner–FKK equation. The groundbreaking contributions of \citet{kallianpur1968,kallianpur1969} established the representation
	of the optimal filter in terms of the unnormalized one, which
	was studied in the pioneering doctoral dissertations of \citet{duncan1967},
	\citet{mortensen1966} and the important paper of \citet{zakai1969}. The linear SPDE governing the dynamics of the unnormalized filter is commonly referred to as the Duncan-Mortensen-Zakai equation, or more succinctly, Zakai's equation. For a more comprehensive and detailed introduction to nonlinear filtering, we refer the reader to the books of \citet{BC}, \citet{K:c}, \citet{LS1,LS2},  and \citet{X}.
	
	Recently, stochastic filtering has gained significant relevance in mathematical finance due to its diverse applications. In this context, observation processes commonly involve the prices of stocks or other securities, along with their derivatives. The associated quantities, such as the appreciation rates, serve as the essential ``signal'' that requires precise estimation through stochastic filtering methods. We refer the reader to the papers of \citet{BX}, \citet{HWW}, \citet{Lak}, \citet{NP}, \citet{Rogers}, \citet{Xia2}, \citet{xiong2007}, \citet{xiong2020}, and \citet{Zen}, for some examples. A related topic worth mentioning is the so-called ``optimal control under partial information'', which has captured the attention of numerous researchers. Here we mention a few works of \citet{os}, \citet{HWX}, and \citet{wang2013, wang2015}. We refer to the book of \citet{wang2018} for a detailed introduction to this topic.
	
	The fundamental premise of classical stochastic filtering rests on the perfect modeling of both the signal and observation processes. Nonetheless, this assumption may not always be tenable in various practical applications. Particularly, model ambiguity frequently arises in mathematical finance, as evident in works such as \citet{chen2002}, \citet{chen2012} and \citet{EJ}. 
	Thus, the objective of this article is to delve into the filtering problem under model ambiguity.
	
	Consider the following filtering model with real-valued signal and observation processes:
	
	\begin{equation}\label{intro-filter}
		\left\{\begin{array}{ccl}
			dX_t&=&b(X_t)dt+\si(X_t)dW_t,\quad X_0=x,\\
			dY_t&=&h(X_t)dt+dB_t,\quad Y_0=0,
		\end{array}\right.
	\end{equation}
	where the coefficients $b,\si$ and $h$ are continuous real functions, $(W_t,B_t)$ is a 2-dimensional standard Brownian motion under probability measure $P\in\cP$. In this context, the probability measure $P$ serves as a precise evaluation criterion for the signal process observed by external observers. The probability measure set $\cP$ is considered to encompass all evaluation criteria with the ambiguity parameters $\th\in\Th$. Then we naturally seek to estimate the signal process by minimizing the squared error in the worst-case scenario: 
	
	\[
	\min_{\xi}\max_{P\in\cP} \EE^P[|X_t-\xi|^2],
	\]
	where $\EE^P$ is the expectation with respect to the probability measure $P$, and $\xi$ is over all $\cG$-measurable random variables. Here $\cG\equiv\si(Y_s:s\le t)$.
	
	In this paper, we first prove the unique existence of the optimal control that minimizes the squared error within the most unfavorable evaluation criteria, relying on partially observable information under some necessary mathematical assumptions. Afterwards, by utilizing the mini-max theorem, we interchange the order of extremum problems and characterize the optimal control. Furthermore, we obtain the most favorable evaluation criteria, namely, the optimal probability measure.
	
	The rest of this article is organized as follows. In Section \ref{sec2}, we introduce the stochastic filtering problem under model ambiguity and state the main results of this article. The unique existence of the ambiguity filter is proved in Section \ref{sec3}. Section \ref{sec4} is devoted to the characterization of the ambiguity filter and the optimal probability measure by converting the filtering problem to a weighted conditional mean-field optimal control problem.  
	
	\section{Problem formulation and main results}\label{sec2}
	\setcounter{equation}{0}
	\renewcommand{\theequation}{\thesection.\arabic{equation}}
	
	Let $T>0$ be a fixed time horizon. Let $(\Omega,\cF,\FF\equiv\{\cF_t\}_{0\le t \le T},\bb P)$ be a complete filtered probability space satisfying the usual condition, on which two standard independent $\cF_t$-Brownian motions $W_t$ and $B_t$ are defined, where $\cF_t=\cF_t^{W,B}$ is their natural filtration and $\cF=\cF_T$. Let $\RR^n$ be the $n$-dimensional real Euclidean space and $|\cdot|$ be the norm in a Euclidean space. We denote by $C_b(\RR^d)$ the set of all bounded and continuous mappings on $\RR^d$, $L^p(\Om,\cF,P)$ the space of all the $\cF$-measurable $p$-power integrable random variables $\xi$ with $\|\xi\|_p\equiv (\EE[\xi^p])^{1/p}$ and, $L_\FF^p(0,T;\RR^d)$ the set of all $\RR^d$-valued $\mathcal{F}_t$-adapted processes $\phi_t$ such that for $p>1$,
	
	\[
	\EE\left[\int_0^T|\phi_s|^pds\right]<\infty.
	\] 
	Throughout this paper, all processes will be $\mathcal{F}_t$-adapted unless stated otherwise.
	
	The signal process $X_t$, or a function $f(X_t)$ of it, is what we want to estimate and the observation process $Y_t$ provides the information we can use. 
	Namely, if the model is without ambiguity, we look for a $\cG_t\equiv\si(Y_s:\;s\le t)$-adapted process $u_t$ such that for any $t\in[0,T]$, $\EE\left[|f(X_t)-u_t|^2\right]$ is minimized. It is clear that this $u_t$ also minimizes the quantity
	$\EE\left[\int^T_0|f(X_t)-u_t)|^2dt\right]$. On the other hand, if the minimizer of the latter quantity is unique which we will prove in this article, then it must
	coincide with the unique minimizer of the former. Thus, these two minimization problems are essentially equivalent. However, the latter is more convenient from a control point of view.
	\begin{definition}
		A control process $u_t$ is called admissible if it belongs to $L^2_\GG(0,T;\RR)$.
		The set of all admissible controls is denoted by $\cU_{ad}$.
	\end{definition}
	Now we are ready to introduce the stochastic filtering problem with drift ambiguity. For every $\th_t$ belonging to 
	
	\begin{equation}\label{def-th}
		\Th=\{(\th_t)_{t\in[0,T]}:\th_t\in\cF_t,\mbox{ and }\sup_{t\in[0,T]}|\th_t|\le k\},
	\end{equation}
	let $\cP$ be a class of probability measures which is defined as
	
	\begin{equation}\label{def-cP}
		\cP=\left\{Q\sim P: \frac{dQ}{dP}=\exp\left(\int^T_0\th_sdW_s-\frac12\int^T_0\th_s^2ds\right),\;\mbox{ with }\th\in\Th\right\},
	\end{equation}
	where $k$ is a non-negative constant, and $\th_t\in\cF_t$ means that $\th_t$ is $\cF_t$-measurable. 
	The cost functional associated with the control $u$ and the probability measure $Q\in\cP$ is defined as
	
	\begin{equation}\label{def-j}
		J(u,Q)=\EE^Q\left[\int_0^T|f(X_t)-u_t|^2dt\right],
	\end{equation}
	where $f\in C_b(\RR)$.
	The model ambiguity means that the true probability measure is one taken from $\cP$. This is also equivalent to drift ambiguity because by Girsanov's formula, $\widetilde{W}_t\equiv W_t-\int^t_0\th_sds$ is a Brownian motion and, under $Q$, $X_t$ is a diffusion process with drift coefficient $b+\si\th$. Namely, the signal process $X_t$ can be rewritten as 
	
	\begin{equation}\label{eq1102a}
		dX_t=(b(X_t)+\si(X_t)\th_t)dt+\si(X_t)d\widetilde{W}_t,\qquad X_0=x.
	\end{equation} 
	To simplify the notation,  we assume that $\si(x)\ge 0$ for all $x\in\RR$. 
	
	Before we proceed further, we would like to point out that we can also consider the ambiguity of the observation model by modifying \eqref{def-cP} by changing the formula there for $\frac{dQ}{dP}$ to the following
	
	\[\frac{dQ}{dP}=\exp\(\int^T_0\(\th_sdW_s+\tilde{\th}_sdB_s\)-\frac12\int^T_0\(\th^2_s+\tilde{\th}^2_s\)ds\)\]
	with $|\th_s|\le k$ and $|\tilde{\th}_s|\le\tilde{k}$, where $\tilde{k}$ is another non-negative constant. We choose to take $\tilde{k}=0$ for simplicity of notation since the arguments are similar.
	
	Throughout this paper, we impose the following hypotheses.
	\begin{hypo*}[H1]
		The functions $b,\sigma,f,h$ are continuously differentiable with respect to $x$ and their partial derivatives $b_x,\si_x,f_x,h_x$ are uniformly bounded. Further, $f,h$ are bounded functions.
	\end{hypo*}
	Because of the model ambiguity, we naturally consider the square error in the worst case scenario. 
	\begin{problem*}[O]
		For a given initial state $x\in\RR$, under Hypothesis (H1), seek a control $u\in\cU_{ad}$ such that 
		
		\[J(u)=\inf_{v\in\cU_{ad}}\sup_{Q\in\cP}J(v,Q)\equiv\inf_{v\in\cU_{ad}}J(v)\equiv J_0,\]
		subject to \eqref{def-j}. If such an identity holds, we call $u$ the ambiguity filter of $f(X_t)$.
	\end{problem*}
	\begin{rem}
		Note that $u_t$ in the definition above is $f$-dependent. We omit this dependence for simplicity. We also point out that the ambiguity filter coincides with the classical one if $\cP=\{P\}$ contains a single probability measure only.
	\end{rem}
	Next, we present the main results of this paper. The rigorous proofs are deferred to the subsequent sections. First, we establish the existence and uniqueness of the ambiguity filter. 
	\begin{thm}\label{thm1.1}
		Suppose that Hypothesis (H1) holds. For each initial state $x\in\RR$, Problem (O) admits a unique ambiguity filter.
	\end{thm}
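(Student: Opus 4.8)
The plan is to treat Problem (O) as a convex minimization over the Hilbert space $\cU_{ad}=L^2_\GG(0,T;\RR)$, which is a closed (hence weakly closed and reflexive) subspace of $L^2(\Om\times[0,T],\,dP\otimes dt)$, and to apply the direct method. Writing $J(v)=\sup_{Q\in\cP}J(v,Q)$ as in Problem (O), I would establish that $J$ is convex, coercive and weakly lower semicontinuous to obtain a minimizer, and then that $J$ is strictly convex to force the minimizer to be unique. Two features will be used repeatedly: $f$ is bounded by Hypothesis (H1), and $P\in\cP$ itself, corresponding to the choice $\th\equiv0\in\Th$.

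Convexity and coercivity are the routine part. For each fixed $Q$, the map $v\mapsto J(v,Q)=\EE^Q\int_0^T|f(X_t)-v_t|^2\,dt$ is a convex quadratic, so its pointwise supremum $J$ over $Q\in\cP$ is convex. Coercivity follows by keeping only $Q=P$: the elementary bound $|f(X_t)-v_t|^2\ge\frac12 v_t^2-f(X_t)^2$ together with $\|f\|_\infty<\infty$ gives $J(v)\ge J(v,P)\ge\frac12\|v\|^2_{\cU_{ad}}-\|f\|_\infty^2T$. Evaluating at $v=0$ also shows $J_0\le J(0)\le\|f\|_\infty^2T<\infty$, so the infimum is finite.

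For existence I would show $J$ is weakly lower semicontinuous. The delicate point is that the densities $dQ/dP$ are not bounded, so the norms of $L^2(P\otimes dt)$ and $L^2(Q\otimes dt)$ are not uniformly equivalent and one cannot argue continuity by comparing norms. I avoid this by using a.e. convergence: if $v_n\to v$ strongly in $\cU_{ad}$, a subsequence converges $dP\otimes dt$-a.e., hence $dQ\otimes dt$-a.e. since $Q\sim P$, so Fatou's lemma yields $J(v,Q)\le\liminf_n J(v_n,Q)$; thus each $J(\cdot,Q)$ is strongly lower semicontinuous, and so is their supremum $J$. Being convex and strongly lower semicontinuous, $J$ is then weakly lower semicontinuous. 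A minimizing sequence is bounded by coercivity, admits a weakly convergent subsequence with limit $u\in\cU_{ad}$ by weak closedness, and weak lower semicontinuity gives $J(u)\le J_0$, so $u$ is an ambiguity filter.

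Uniqueness is the crux, and it is where the structure of $\cP$ enters. From the identity $J(\lambda v^1+(1-\lambda)v^2,Q)=\lambda J(v^1,Q)+(1-\lambda)J(v^2,Q)-\lambda(1-\lambda)\EE^Q\int_0^T|v^1_t-v^2_t|^2\,dt$, taking the supremum over $Q$ gives $J(\lambda v^1+(1-\lambda)v^2)\le\lambda J(v^1)+(1-\lambda)J(v^2)-\lambda(1-\lambda)\inf_{Q\in\cP}\EE^Q\int_0^T|v^1_t-v^2_t|^2\,dt$, so strict convexity, hence uniqueness, reduces to the claim that $\inf_{Q\in\cP}\EE^Q\int_0^T w_t^2\,dt>0$ whenever $w\ne0$ in $\cU_{ad}$. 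The main obstacle is exactly this uniform positivity, which must again survive the unboundedness of the densities. I would prove it via the a priori bound $\EE^P[(dQ/dP)^{-1}]\le e^{k^2T}$, valid because $|\th|\le k$ makes $\exp(-\int_0^T\th_s\,dW_s-\frac12\int_0^T\th_s^2\,ds)$ a genuine martingale while $\int_0^T\th_s^2\,ds\le k^2T$. Cauchy--Schwarz then gives, for any event $A$, $P(A)\le Q(A)^{1/2}\big(\EE^P[(dQ/dP)^{-1}]\big)^{1/2}$, that is $Q(A)\ge P(A)^2e^{-k^2T}$ uniformly in $Q$. Applying this to $A=\{\int_0^T w_t^2\,dt\ge\delta\}$ for $\delta$ small enough that $P(A)>0$, which is possible since $\EE^P\int_0^T w_t^2\,dt>0$, yields $\EE^Q\int_0^T w_t^2\,dt\ge\delta P(A)^2e^{-k^2T}>0$ for every $Q$, which is the desired uniform lower bound and completes the argument.
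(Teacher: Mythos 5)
Your proof is correct, and it takes a genuinely different route from the paper's. The paper argues dynamically: it represents the worst-case cost $y_t=\sup_{Q\in\cP}V^Q_t$ as the unique solution of the BSDE \eqref{eq:y_t} (Theorem \ref{thm1210a}), truncates a minimizing sequence using Lemma \ref{lem1011a}, extracts a weak limit, upgrades to strong convergence of convex combinations via Mazur's theorem, and then passes to the limit in the cost through an It\^{o}--Gronwall stability estimate for the BSDEs \eqref{eq1210c} and \eqref{eq-hat-yn}; uniqueness is then only asserted to ``follow from convexity.'' You instead work statically by the direct method: convexity of $J(\cdot)=\sup_{Q\in\cP}J(\cdot,Q)$ as a supremum of convex quadratics, coercivity from the single member $P\in\cP$ (the choice $\th\equiv0$), and weak lower semicontinuity from Fatou's lemma along a.e.-convergent subsequences (valid under every $Q\sim P$) combined with the standard fact that a convex, strongly lower semicontinuous function is weakly lower semicontinuous --- no BSDE theory is needed. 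The most valuable difference is your uniqueness argument: convexity alone, which is all the paper invokes, does not give uniqueness, and the paper supplies no strict-convexity modulus, whereas you prove one, namely
\begin{equation*}
\inf_{Q\in\cP}\EE^Q\left[\int_0^T w_t^2\,dt\right]\ge \delta\, P\Bigl(\textstyle\int_0^T w_t^2\,dt\ge\delta\Bigr)^2e^{-k^2T}>0\qquad\mbox{for }w\ne0,
\end{equation*}
obtained from the Cauchy--Schwarz estimate $P(A)\le Q(A)^{1/2}\bigl(\EE^P[(dQ/dP)^{-1}]\bigr)^{1/2}$ and the bound $\EE^P[(dQ/dP)^{-1}]\le e^{k^2T}$, both of which are justified correctly by the boundedness $|\th|\le k$ and the martingale property of the exponential; this genuinely closes a gap the paper leaves open. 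What the paper's route buys in exchange is the BSDE representation of the value process itself, which is of independent interest and quantifies the continuity of $u\mapsto J(u)$; your route buys elementarity and a complete uniqueness proof.
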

	We proceed to characterizing the ambiguity filter which is the second main result of this article. 
	For each $Q\in\cP$, we define another probability measure $\widetilde{Q}$ such that $\widetilde{Q}\sim Q$ with Radon-Nikodym derivative given by
	
	\begin{equation}\label{eq0905b}
		\left.\frac{d\widetilde{Q}}{dQ} \right|_{\mathcal{F}_t}=M_t^{-1}\equiv\exp\(-\int^t_0h(X_s)dB_s-\frac12\int^t_0h(X_s)^2ds\).\end{equation}
	By Hypothesis (H1), due to the boundedness of $h$, the Novikov's condition holds.  Note that, under the probability measure $\widetilde{Q}$,  $Y_t$ is a Brownian motion independent of $\widetilde{W}_t$, and
	
	\begin{equation}\label{eq1102b}
		dM_t=h(X_t)M_tdY_t,\qquad M_0=1.
	\end{equation}
	The adjoint processes $(p_t,q_t,P_t,Q_t)$ are governed by the following backward stochastic differential equations (BSDEs):
	
	\begin{equation}\label{eq0905a}
		\left\{
		\begin{aligned}
			dp_t
			&=-\left\{h(X_t)q_t+\frac12\left(f(X_t)-\frac{\tilde{\EE}[f(X_t)M_t|\cG_t]}{\tilde{\EE}[M_t|\cG_t]}\right)^2\right\}dt+q_tdY_t,\\[0.5em]
			dP_t
			&=-\bigg\{(b'(X_t)+\si'(X_t)\th_t)P_t+\si'(X_t)Q_t-h'(X_t)M_t\big(q_t+h(X_t)p_t\big)\\
			&\quad\,+f'(X_t)M_t\biggl(f(X_t)-\frac{\tilde{\EE}[f(X_t)M_t|\cG_t]}{\tilde{\EE}[M_t|\cG_t]}\biggr)\bigg\}dt+Q_td\widetilde{W}_t,\\
			p_T&=0,\quad P_T=0,
		\end{aligned}
		\right.
	\end{equation}
	where $\tilde{\EE}$ denotes the expectation with respect to probability measure $\tilde{Q}$.
	
	\begin{thm}\label{thm1.2}
		Let Hypothesis (H1) hold. For each $\th\in\Th$ in (\ref{def-th}) being fixed, the forward and backward stochastic differential equation (FBSDE) (\ref{eq1102a}, \ref{eq1102b}, \ref{eq0905a}) has a unique solution. Further, the optimal ambiguity filter satisfying the SDE \eqref{fkk} is given by
		
		\begin{equation}\label{eq-u-frac}
			u_t=\frac{\tilde{\EE}[f(X_t)M_t|\cG_t]}{\tilde{\EE}[M_t|\cG_t]},
		\end{equation}
		where $\widetilde{Q}$ is defined through (\ref{def-cP}) and (\ref{eq0905b}) with $\th_t=k \mbox{ sgn}(P_t)$.
	\end{thm}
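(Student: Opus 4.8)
The plan is to split the proof into the two assertions in the statement: well-posedness of the FBSDE for a frozen control $\th$, and the saddle-point characterization of the ambiguity filter. For the first assertion I would exploit the triangular (decoupled) structure of the system when $\th$ is fixed. Under Hypothesis (H1) the coefficients $b,\si$ are globally Lipschitz and $h$ is bounded, so the forward SDE \eqref{eq1102a} for $X_t$ admits a unique strong solution, and the linear SDE \eqref{eq1102b} for $M_t$ is then solved as a Dol\'eans--Dade exponential, which is a genuine $\widetilde Q$-martingale by Novikov's condition. Feeding $(X_t,M_t)$ into the backward equations, the pair $(p_t,q_t)$ solves a linear BSDE whose driver $h(X_t)q_t+\tfrac12(f(X_t)-u_t)^2$ is Lipschitz in $q_t$ with a bounded, $\cF_t$-measurable source (here $u_t$ from \eqref{eq-u-frac} is bounded because $f$ is), and $(P_t,Q_t)$ then solves a further linear BSDE with drift affine in $(P_t,Q_t)$ and a square-integrable source built from $(X,M,p,q)$. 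Solving these two BSDEs in turn by the standard existence--uniqueness theory for linear BSDEs yields the unique solution of \eqref{eq0905a}.

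For the characterization I would first pass from $Q$ to the fixed reference measure $\widetilde Q$ of \eqref{eq0905b}. Since $M_t=\left.\tfrac{dQ}{d\widetilde Q}\right|_{\cF_t}$, Fubini and the tower property rewrite the cost as $J(u,Q)=\tilde\EE\bigl[\int_0^T M_t\,(f(X_t)-u_t)^2\,dt\bigr]$, a weighted conditional mean-field problem in which $Y_t$ is a $\widetilde Q$-Brownian motion independent of $\widetilde W_t$. I would then invoke the mini-max theorem to interchange the two extrema: this is natural because $u\mapsto J(u,Q)$ is convex (quadratic) on the convex set $\cU_{ad}$, while $Q\mapsto J(u,Q)=\EE^Q[\cdot]$ is affine, hence concave, in the measure, so the remaining hypotheses reduce to the convexity and weak compactness of the admissible family $\cP$, with Theorem \ref{thm1.1} guaranteeing that the value is attained. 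For $\th$ frozen, the inner minimization is a pointwise weighted least-squares problem: minimizing $\tilde\EE[M_t(f(X_t)-u_t)^2]$ over $\cG_t$-measurable $u_t$ gives the first-order condition $\tilde\EE[M_t(f(X_t)-u_t)\mid\cG_t]=0$, i.e. exactly the Kallianpur--Striebel representation \eqref{eq-u-frac}; the classical FKK computation then shows that this conditional expectation satisfies the SDE \eqref{fkk}.

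It remains to solve the outer maximization over $\th\in\Th$, and here I would apply the stochastic maximum principle for the weighted conditional mean-field system with state $(X_t,M_t)$ and adjoint variables $(p_t,q_t)$ conjugate to $M_t$ and $(P_t,Q_t)$ conjugate to $X_t$. Because $\th_t$ enters only through the drift term $\si(X_t)\th_t$ of $X_t$, the Hamiltonian is affine in $\th_t$ with coefficient $\si(X_t)P_t$; maximizing it over the ball $|\th_t|\le k$ therefore yields the bang-bang rule $\th_t=k\,\mathrm{sgn}\bigl(\si(X_t)P_t\bigr)=k\,\mathrm{sgn}(P_t)$, using $\si\ge0$. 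Matching the drift and martingale-integrand terms generated by the Hamiltonian with those in \eqref{eq0905a} identifies $(p,q,P,Q)$ as the stated adjoint processes, and combining the inner and outer optimizers produces the saddle point $(u_t,\widetilde Q)$ characterizing the ambiguity filter.

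The main obstacle, and the step deserving the most care, is the rigorous derivation of the two adjoint BSDEs in \eqref{eq0905a} for this partially observed, measure-weighted mean-field problem. The difficulty is that the running cost depends on $\th$ both directly, through the dynamics of $X_t$, and indirectly, through the filter $u_t=\tilde\EE[f(X_t)M_t\mid\cG_t]/\tilde\EE[M_t\mid\cG_t]$, which is a nonlinear functional of the conditional law of $(X_t,M_t)$ given $\cG_t$. I would neutralize the variation running through $u_t$ by an envelope argument---at the inner optimum the relation $\tilde\EE[M_t(f-u_t)\mid\cG_t]=0$ forces those variations to vanish---while the remaining conditional-law variations must be propagated through the coupled $(X,M)$ dynamics. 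In particular, perturbing $X_t$ feeds into $M_t$ through the coefficient $h(X_t)$, and this is precisely the mechanism that should generate the coupling term $-h'(X_t)M_t\bigl(q_t+h(X_t)p_t\bigr)$ in the $P_t$-equation; checking that this term, together with the quadratic source $\tfrac12(f-u_t)^2$ in the $p_t$-equation, emerges with the correct sign from the conditional mean-field adjoint calculus is the technical heart of the argument.
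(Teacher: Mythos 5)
Your proposal follows essentially the same route as the paper: the mini-max interchange justified by convexity of $\cU_{ad}$ and the convexity/weak compactness of $\cP$, the inner minimization for frozen $\th$ solved by the weighted conditional expectation (Kallianpur--Striebel), giving \eqref{eq-u-frac} and \eqref{fkk}, the outer maximization handled by a variational (maximum-principle) argument with adjoint processes $(p_t,q_t,P_t,Q_t)$ yielding the bang-bang rule $\th_t=k\,\mathrm{sgn}(P_t)$ via $\si\ge0$, and well-posedness of the FBSDE from its decoupled structure plus linear BSDE theory. Even your ``envelope'' observation that first-order variations through $u_t$ vanish at the inner optimum is exactly the paper's identity $\tilde{\EE}[l_{\rho_1}(t)|\cG_t]=\tilde{\EE}[l_{\rho_2}(t)|\cG_t]=0$.
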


	\section{Existence and uniqueness of the ambiguity filter}\label{sec3}
	\setcounter{equation}{0}
	\renewcommand{\theequation}{\thesection.\arabic{equation}}
	
	In this section, we proceed to prove Theorem \ref{thm1.1}. 
	Denote by $V^Q_t$ as the conditional expectation of the total square error in the time interval $[t,T]$ with respect to an admissible measure $Q$:
	
	\begin{equation}\label{eq1210a}
		V^Q_t=\EE^Q\bigg[\int^T_t|f(X_s)-u_s|^2ds\;\Big|\;\mathcal{F}_t\bigg],\quad 0\le t\le T.\end{equation}
	Let
	
	\begin{equation}\label{eq1210b}
		y_t=\sup_{Q\in\cP}V^Q_t,\quad \mbox{and}\quad J(u)=\sup_{Q\in\cP}J(u,Q),\end{equation}
	where $J(u,Q)$ is defined in \eqref{def-j}. It is easy to see that $J(u)=y_0$. 
	\begin{thm}\label{thm1210a}
		The process $y_t$ is the unique solution to the BSDE
		
		\begin{equation}\label{eq:y_t}
			\left\{\begin{array}{lll}
				dy_t&=&\big(-|f(X_t)-u_t|^2+k|z_t|\big)dt+z_tdW_t+\tilde{z}_tdB_t, \\[0.5em]
				y_T&=&0.
			\end{array}\right.
		\end{equation}
	\end{thm}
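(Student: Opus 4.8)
The plan is to read \eqref{eq:y_t} as the dynamic-programming equation for the inner maximization $\sup_{Q\in\cP}V^Q_t$ and to identify its solution with this upper value through a comparison argument. Since every $Q\in\cP$ is parametrized by a drift $\th\in\Th$ with $|\th_s|\le k$, maximizing over $Q$ is the same as maximizing over such drifts. First I would fix $Q$ (equivalently $\th$) and derive the BSDE solved by $V^Q_t$. By definition $M^Q_t\equiv\EE^Q[\int_0^T|f(X_s)-u_s|^2ds\mid\cF_t]$ is a $Q$-martingale, so the martingale representation theorem with respect to the $Q$-Brownian motions $\widetilde{W}$ and $B$ (note that Girsanov only shifts $W$, so $B$ remains a $Q$-Brownian motion) gives $dM^Q_t=a_td\widetilde{W}_t+\tilde a_tdB_t$ for some square-integrable integrands. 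Writing $V^Q_t=M^Q_t-\int_0^t|f(X_s)-u_s|^2ds$ and substituting $d\widetilde{W}_t=dW_t-\th_tdt$ yields
\[
dV^Q_t=\big(-|f(X_t)-u_t|^2-a_t\th_t\big)dt+a_tdW_t+\tilde a_tdB_t,\qquad V^Q_T=0 ,
\]
so that $V^Q$ solves a BSDE driven by $(W,B)$ with generator $-|f(X_t)-u_t|^2-a_t\th_t$.

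Next I would treat \eqref{eq:y_t} as a BSDE in its own right. Its generator $g(t,z)=-|f(X_t)-u_t|^2+k|z|$ is Lipschitz in $z$ with constant $k$ and independent of $y$, while the running term $|f(X_t)-u_t|^2$ is integrable by the boundedness of $f$ in Hypothesis (H1) together with the admissibility $u\in L^2_\GG(0,T;\RR)$; hence \eqref{eq:y_t} admits a unique adapted solution $(y_t,z_t,\tilde z_t)$ by the standard theory. Because $|\th_s|\le k$, the generator of every $V^Q$ is dominated pointwise by that of $y$, namely $-|f-u|^2-z\th_t\le-|f-u|^2+k|z|$, and both terminal values vanish. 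The comparison theorem for BSDEs then gives $V^Q_t\le y_t$ for all $t$ and all $Q\in\cP$, whence $\sup_{Q\in\cP}V^Q_t\le y_t$.

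For the reverse inequality I would exhibit the maximizing measure. Choosing the feedback drift $\th^*_t=-k\,\mbox{sgn}(z_t)$ — which is $\cF_t$-adapted and satisfies $|\th^*_t|\le k$, hence lies in $\Th$ and defines some $Q^*\in\cP$ — makes the generator of $V^{Q^*}$ coincide with $g$, since $-z_t\th^*_t=k|z_t|$. By uniqueness of the BSDE solution, $V^{Q^*}_t=y_t$, so the supremum is attained and $\sup_{Q\in\cP}V^Q_t\ge y_t$. Combining the two bounds gives $y_t=\sup_{Q\in\cP}V^Q_t$, and the uniqueness assertion is inherited directly from the uniqueness of the solution to \eqref{eq:y_t}. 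The step I expect to demand the most care is securing, uniformly over the family $\cP$, the square-integrability of the martingale-representation integrands required by the comparison theorem: this rests on the boundedness of $f$ and of $\th$ and on the integrability of $u$ (and is automatic for the bounded filters arising as candidates), and may otherwise be handled through the theory of BSDEs with $L^1$-data; a secondary technical point is verifying that the measurable selection $\th^*_t=-k\,\mbox{sgn}(z_t)$ indeed produces an admissible element of $\Th$.
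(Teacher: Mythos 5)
Your architecture (martingale representation under each $Q$ to get a BSDE for $V^Q$, comparison to bound the supremum, an explicit feedback drift to attain it, uniqueness from Lipschitz theory) is the natural rigorous version of the paper's argument, and in one respect it is more careful: the paper simply asserts that the integrands $z_t,\tilde z_t$ may be taken independent of $Q$, which your comparison route does not need. However, your proof contains a genuine error: the comparison theorem is applied in the wrong direction. Write BSDEs in the standard form $Y_t=\xi+\int_t^T g(s,Z_s)\,ds-\int_t^T Z_s\,dW_s$, so that a larger generator $g$ produces a larger solution; equivalently, in your ``drift'' formulation $dY_t=\mu_t\,dt+Z_t\,dW_t$ one has $Y_t=\xi-\int_t^T\mu_s\,ds-\int_t^T Z_s\,dW_s$, so a pointwise \emph{smaller} drift produces a \emph{larger} solution. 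The standard-form generator of $V^Q$ is $g^Q(t,z)=|f(X_t)-u_t|^2+\th_t z$, while that of \eqref{eq:y_t} as printed is $g(t,z)=|f(X_t)-u_t|^2-k|z|$. Since $\th_t z\ge -k|z|$, comparison yields $V^Q_t\ge y_t$ for every $Q\in\cP$ --- the opposite of what you assert. Your attainment step is internally consistent (with $\th^*_t=-k\,\mathrm{sgn}(z_t)$ one does get $V^{Q^*}=y$ by uniqueness of the resulting linear BSDE), but combined with the correctly oriented comparison it proves that the solution of \eqref{eq:y_t} equals $\inf_{Q\in\cP}V^Q_t$, not $\sup_{Q\in\cP}V^Q_t$.

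The root of the trouble is a sign inconsistency in the paper itself, which your flipped comparison inadvertently ``compensates'' for. The paper's own proof derives $y_t=\int_t^T\big(|f(X_s)-u_s|^2+k|z_s|\big)ds-\int_t^T z_s\,dW_s-\int_t^T\tilde z_s\,dB_s$, whose differential form is
\begin{equation*}
dy_t=\big(-|f(X_t)-u_t|^2-k|z_t|\big)dt+z_t\,dW_t+\tilde z_t\,dB_t,\qquad y_T=0,
\end{equation*}
i.e.\ standard-form generator $|f-u|^2+k|z|$; this, not \eqref{eq:y_t} as printed, is the correct equation for the supremum, consistent with the upper expectation under $\ka$-ignorance of \citet{chen2002}. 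A quick sanity check: with no running cost and terminal value $W_T$, one has $\sup_{Q\in\cP}\EE^Q[W_T|\cF_t]=W_t+k(T-t)$, which solves $dY_t=-k\,dt+dW_t=-k|Z_t|\,dt+Z_t\,dW_t$ with $Z_t=1$, confirming the minus sign. Once the sign in \eqref{eq:y_t} is corrected, your proof goes through verbatim after flipping the two corresponding signs: the comparison $g^Q(t,z)=|f-u|^2+\th_t z\le|f-u|^2+k|z|$ gives $V^Q\le y$, and the maximizing drift is $\th^*_t=+k\,\mathrm{sgn}(z_t)$ (precisely the measurable selection of Lemma B.1(b) of \citet{chen2002} that the paper invokes), which yields $V^{Q^*}=y$ by uniqueness. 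So keep your structure, but reverse the comparison and the sign of $\th^*$, and note that what is then proved is the theorem with $-k|z_t|$ in place of $+k|z_t|$.
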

	\begin{proof}
		It is clear that  $V^Q_t+\int^t_0|f(X_s)-u_s|^2ds$ is a martingale under the probability measure $Q$. The martingale representation theorem implies that $V^Q_t$ is a solution to the BSDE:
		
		\[
		\left\{\begin{array}{lll}
			dV^Q_t&=& z_tdW_t+\tilde{z}_tdB_t-|f(X_t)-u_t|^2dt-z_t\th_tdt,\\[0.5em]
			V^Q_T&=&0,
		\end{array}\right.\]
		where $z_t$ and $\tilde{z}_t$ are predictable processes with respect to the filtration $(\mathcal{F}_t)$. Note that the probability uncertainty only reflects on the drift, and hence, $z_t$ and $\tilde{z}_t$ do not depend on the probability measure $Q$.
		
		Note that
		
		\[\max_{\th\in\Th}\th_tz_t=k|z_t|.\]
		Then,
		
		\begin{eqnarray*}
			y_t &=& \sup_{Q\in\cP}V^Q_t\\
			&=&\sup_{Q\in\cP}\bigg\{\int_t^T \big(|f(X_s)-u_s|^2+z_s\th_s\big)ds-\int_t^T z_sdW_s-\int_t^T \tilde{z}_sdB_s\bigg\} \\
			&\le&\sup_{Q\in\cP}\bigg\{\int_t^T \big(|f(X_s)-u_s|^2+k|z_s|\big)ds-\int_t^T z_sdW_s-\int_t^T \tilde{z}_sdB_s\bigg\}\\
			&=&\int_t^T \Big(|f(X_s)-u_s|^2+k|z_s|\Big)ds-\int_t^T z_sdW_s-\int_0^t \tilde{z}_sdB_s.
		\end{eqnarray*}
		
		On the other hand, by Lemma B.1(b) in \citet{chen2002}, there exists $\th^*_t\in\Th$  such that
		
		\[
		\th^*_tz_t=\max_{\th_t}\th_tz_t=k|z_t|.
		\]
		Hence,
		
		\begin{eqnarray*}
			y_t &=& \sup_{Q\in\cP}V^Q_t\\
			&=&\sup_{Q\in\cP}\bigg\{\int_t^T \big(|f(X_s)-u_s|^2+z_s\th_s\big)ds-\int_t^T z_sdW_s-\int_t^T \tilde{z}_sdB_s\bigg\} \\
			&\ge& \int_t^T \Big(|f(X_s)-u_s|^2+\th^*_tz_t\Big)ds-\int_t^T z_sdW_s-\int_t^T \tilde{z}_sdB_s\\
			&=&\int_t^T \Big(|f(X_s)-u_s|^2+k|z_t|\Big)ds-\int_t^T z_sdW_s-\int_t^T \tilde{z}_sdB_s,
		\end{eqnarray*}
		which implies that $y_t$ is a solution to (\ref{eq:y_t}). The uniqueness follows from the standard result of BSDE since the coefficients satisfy Lipschitz's continuity. This finishes the proof.
	\end{proof}
	
	Note that $y_t$ can also be represented as the unique solution to the following BSDE
	
	\begin{equation}\label{eq1210c}
		\left\{\begin{array}{ccl}
			dy_t&=&\big(-|f(X_t)-u_t|^2-h(X_t)\tilde{z}_t+k|z_t|\big)dt+z_tdW_t+\tilde{z}_tdY_t,\\[0.5em]
			y_T&=&0.
		\end{array}\right.\end{equation}
	
	Before proceeding with the proof of Theorem \ref{thm1.1}, it is essential to lay the groundwork with the following preparation.
	
	\begin{lemma}\label{lem1011a}
		To search for optimal control, we can restrict the admissible one to those $u$ with $\|u\|_\infty\le\|f\|_\infty$, 
		where $\|\cdot\|_\infty$ denotes the supreme norm.
	\end{lemma}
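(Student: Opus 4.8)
The plan is to use a truncation (projection) argument: given an arbitrary admissible control $u\in\cU_{ad}$, I would replace it by its pointwise projection onto the interval $[-\|f\|_\infty,\|f\|_\infty]$ and show that this projection can only decrease the worst-case cost $J(\cdot)$, while manifestly producing a control bounded by $\|f\|_\infty$. This suffices, since it shows the infimum over $\cU_{ad}$ is attained on the bounded subclass.

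Concretely, define the truncation map $\Pi(y)=(y\wedge\|f\|_\infty)\vee(-\|f\|_\infty)$ and set $\bar u_t=\Pi(u_t)$. The first step is to verify that $\bar u$ is again admissible: since $\Pi$ is a fixed Lipschitz function and $u$ is $\cG_t$-adapted, $\bar u$ is $\cG_t$-adapted, and because $\|\bar u\|_\infty\le\|f\|_\infty<\infty$ it certainly lies in $L^2_\GG(0,T;\RR)$.

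The heart of the argument is the pointwise inequality
\[
|f(X_t)-\bar u_t|^2\le|f(X_t)-u_t|^2,\qquad\text{a.e. }(t,\omega).
\]
This holds because $\Pi$ is the metric projection onto the convex set $[-\|f\|_\infty,\|f\|_\infty]$ and, since $f\in C_b(\RR)$ under Hypothesis (H1), the target value $f(X_t)$ already belongs to that set; hence projecting $u_t$ onto it cannot increase the distance to $f(X_t)$. (If a self-contained verification is preferred, one simply checks the three cases $|u_t|\le\|f\|_\infty$, $u_t>\|f\|_\infty$, and $u_t<-\|f\|_\infty$ directly.) Integrating this inequality in $t$ and taking $\EE^Q$ gives $J(\bar u,Q)\le J(u,Q)$ for every $Q\in\cP$, and then passing to the supremum over $Q\in\cP$ yields $J(\bar u)\le J(u)$.

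Consequently the infimum of $J$ over $\cU_{ad}$ coincides with the infimum over the subclass $\{u\in\cU_{ad}:\|u\|_\infty\le\|f\|_\infty\}$, which is precisely the assertion. I do not anticipate a genuine obstacle here; the only points requiring a little care are confirming that $\bar u$ remains admissible (adaptedness and square-integrability) and that the bound $|f(X_t)|\le\|f\|_\infty$ used in the contraction step is guaranteed by the boundedness of $f$ in Hypothesis (H1).
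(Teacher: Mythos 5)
Your proposal is correct and follows essentially the same argument as the paper: truncate $u$ to $[-\|f\|_\infty,\|f\|_\infty]$, verify the pointwise inequality $|f(X_t)-\bar u_t|\le|f(X_t)-u_t|$ (the paper does this by the same three-case check you mention), and conclude $J(\bar u,Q)\le J(u,Q)$ for every $Q\in\cP$, hence $J(\bar u)\le J(u)$ after taking the supremum. Your additional remarks on adaptedness and square-integrability of the truncated control are a slight refinement of the paper's proof but do not change the argument.
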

	\begin{proof}
		For any $u\in\cU_{ad}$, we define
		
		\[\tilde{u}_t=\left\{\begin{array}{cl}
			u_t,&\quad\mbox{ if } |u_t|\le\|f\|_\infty,\\[0.5em]
			\|f\|_\infty,&\quad\mbox{ if } u_t>\|f\|_\infty,\\[0.5em]
			-\|f\|_\infty,&\quad\mbox{ if } u_t<-\|f\|_\infty.
		\end{array}\right.\]
		It is easy to show that
		
		\[|f(X_t)-\tilde{u}_t|\le |f(X_t)-u_t|,\]
		and hence, $J(\tilde{u},Q)\le J(u,Q)$. This implies that $J(\tilde{u})\le J(u)$.
	\end{proof}
	
	\begin{proof}[Proof of Theorem \ref{thm1.1}]
		Let $u^n\in\cU_{ad}$ be such that $J(u^n)\to J_0$. By Lemma \ref{lem1011a}, without loss of generality, we may and will assume that $\|u^n\|_\infty\le\|f\|_\infty$. Then, $\{u^n\}$ is bounded in $\HH\equiv L^2([0,T]\times\Om)$ and hence, it is compact in the weak topology of $\HH$. Without loss of generality, we assume that
		$u^n\to u$ in the weak topology. By Mazur's theorem, there is a sequence of convex combinations
		
		\[\hat{u}^n=\sum_j\la^n_j u^{n+j}\to u\]
		in the strong topology of $\HH$, where $\la^n_j\ge 0$ with $\sum_j\la^n_j =1$. By the convexity of $\cU_{ad}$,  $\hat{u}^n\in\cU_{ad}$ and hence the limit $u\in\cU_{ad}$.
		
		Let $\hat{y}^n_t$ be given through (\ref{eq1210a}) and (\ref{eq1210b}) with $u$ being replaced by $\hat{u}^n$.
		Similar to Theorem \ref{thm1210a}, $\hat{y}^n_t$, together with $(\hat{z}^n_t,\tilde{\hat{z}}^n_t)$ is the unique solution to BSDE
		
		\begin{equation}\label{eq-hat-yn}
			\left\{\begin{array}{ccl}
				d\hat{y}^n_t&=&\(k|\hat{z}^n_t|-|f(X_t)-\hat{u}^n_t|^2-h(X_t)\hat{\tilde{z}}^n_t\)dt+\hat{z}^n_tdW_t+\hat{\tilde{z}}^n_tdY_t,\\
				\hat{y}^n_T&=&0.\end{array}\right.
		\end{equation}
		
		Note that
		
		\[|f(X_t)-\hat{u}^n_t|^2\le\sum_j\la^n_j|f(X_t)-u^{n+j}_t|^2.\]
		Then,
		
		\begin{eqnarray}\label{eq1210d}
			\hat{y}^n_t
			&=&\sup_{Q\in\cP}\EE^Q\left[\int^T_t|f(X_s)-\hat{u}^n_s|^2ds\;\Big|\;\mathcal{F}_t\right]\nonumber\\
			&\le&\sup_{Q\in\cP}\sum_j\la^n_j\EE^Q\left[\int^T_t|f(X_s)-u^{n+j}_s|^2ds\;\Big|\;\mathcal{F}_t\right]\nonumber\\
			&\le&\sum_j\la^n_j\sup_{Q\in\cP}\EE^Q\left[\int^T_t|f(X_s)-u^{n+j}_s|^2ds\;\Big|\;\mathcal{F}_t\right]\nonumber\\
			&=&\sum_j\la^n_jy^{n+j}_t.
		\end{eqnarray}
		Thus,
		
		\[J(\hat{u}^n)=\hat{y}^n_0\le \sum_j\la^n_j y^{n+j}_0= \sum_j\la^n_j J(u^{n+j}).\]
		For any $\ep>0$, let $N>0$ be such that $J(u^n)<J_0+\ep$ for all $n\ge N$. Then,
		
		\begin{equation}\label{eq-j0}
			J_0\le J(\hat{u}^n)\le \sum_j\la^n_j (J_0+\ep)=J_0+\ep.\end{equation}
		According to  \eqref{eq1210c}, \eqref{eq-hat-yn} and Lemma \ref{lem1011a}, applying It\^{o}'s formula to $|\hat{y}^n_t-y_t|^2$, we derive that
		
		\begin{align}\label{ineq-y-ep}
			&\EE\left[|\hat{y}^n_t-y_t|^2\right]+\EE\left[\int_t^T\(|\hat{z}^n_s-z_s|^2+|\hat{\tilde{z}}^n_s-\tilde{z}_s|^2\) ds\right]\nonumber\\
			=&2\EE\bigg[\int_t^T\<-\hat{y}^n_s+y_s,k(|\hat{z}^n_s|-|z_s|)-h(X_s)(\hat{\tilde{z}}^n_s-\tilde{z}_s\>ds\nonumber\\
			&\quad -\int_t^T\<-\hat{y}^n_s+y_s,\(|f(X_s)-\hat{u}^n_s|^2-|f(X_s)-u_s|^2\)\>ds\bigg]\nonumber\\
			\le& C\EE\left[ \int_t^T\(|\hat{y}^n_s-y_s|^2+|\hat{u}^n_s-u_s|^2\)ds\right]\nonumber\\
			&+\frac12\EE\left[\int_t^T\(|\hat{z}^n_s-z_s|^2+|\hat{\tilde{z}}^n_s-\tilde{z}_s|^2\) ds\right],
		\end{align}
		where $C>0$ is a constant. It follows from Gronwall's inequality that
		
		\[
		\EE\left[|\hat{y}^n_t-y_t|^2\right]\le e^{CT}\EE\left[\int_0^T |\hat{u}^n_t-u_t|^2dt\right], 
		\]
		which yields that 
		$J(\hat{u}^n)=\hat{y}^n_0\to y_0=J(u)$. By \eqref{eq-j0}, we get
		$J(u)=J_0$ and hence,  $u_t$ is an optimal ambiguity filter.
		
		The uniqueness follows from the convexity directly, while the convexity is obtained by comparison similar to (\ref{eq1210d}). The proof completes.
	\end{proof}

	\section{Characterization of the ambiguity filter}\label{sec4}
	\setcounter{equation}{0}
	\renewcommand{\theequation}{\thesection.\arabic{equation}}
	
	In this section, we use a weighted conditional mean-field approach to establish a necessary condition for the ambiguity filter. Namely, we proceed to present the proof of Theorem \ref{thm1.2}.
	
	\begin{lemma}\label{lem-cP-convex}
		The set of probability measures $\cP$ defined in \eqref{def-cP} is convex, and for any $p> 1$, the set $\{\frac{dQ}{dP}:Q\in\cP\}\subset L^p(\Om,\FF;P)$ is compact in the weak topology $\si(L^p(\Om,\FF;P),L^{1+\frac{p}{p-1}}(\Om,\FF;P))$.
	\end{lemma}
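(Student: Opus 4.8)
The statement splits into a convexity claim and a weak-compactness claim, and I would treat them separately, using throughout the representation $\frac{dQ}{dP}=Z^\th_T$ with $Z^\th_t=\exp\(\int_0^t\th_s\,dW_s-\frac12\int_0^t\th_s^2\,ds\)$, which by It\^{o}'s formula is the unique solution of $dZ^\th_t=\th_tZ^\th_t\,dW_t$, $Z^\th_0=1$. For convexity, take $Q_1,Q_2\in\cP$ with generators $\th^1,\th^2\in\Th$ and $\al\in(0,1)$, and set $Z_t=\al Z^{\th^1}_t+(1-\al)Z^{\th^2}_t$, the density process of $\al Q_1+(1-\al)Q_2$. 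The decisive structural fact is that each $Z^{\th^i}$ is driven by $W$ alone, so $dZ_t=\(\al\th^1_tZ^{\th^1}_t+(1-\al)\th^2_tZ^{\th^2}_t\)dW_t$ carries no $dB_t$ term. Setting $\th_t=\(\al\th^1_tZ^{\th^1}_t+(1-\al)\th^2_tZ^{\th^2}_t\)/Z_t$ exhibits $\th_t$ as the convex combination $\frac{\al Z^{\th^1}_t}{Z_t}\th^1_t+\frac{(1-\al)Z^{\th^2}_t}{Z_t}\th^2_t$, so $\th$ is adapted with $|\th_t|\le k$, i.e. $\th\in\Th$. Since $Z$ then solves $dZ_t=\th_tZ_t\,dW_t$, $Z_0=1$, uniqueness gives $Z_T=Z^\th_T$ and hence $\al Q_1+(1-\al)Q_2\in\cP$.

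For compactness, write $\cD=\{Z^\th_T:\th\in\Th\}$. I would first record a uniform moment bound: for any $r\ge1$, factoring $(Z^\th_T)^r=\exp\(r\int_0^T\th_s\,dW_s-\tfrac{r^2}2\int_0^T\th_s^2\,ds\)\exp\(\tfrac{r(r-1)}2\int_0^T\th_s^2\,ds\)$ and applying Novikov (valid since $|\th|\le k$) to the first factor yields $\EE[(Z^\th_T)^r]\le e^{r(r-1)k^2T/2}$; the same device applied to $(Z^\th_T)^{-s}$ gives $\EE[(Z^\th_T)^{-s}]\le e^{s(s+1)k^2T/2}$. In particular $\cD$ is bounded in $L^p$, and since $L^p$ is reflexive for $p>1$ it is relatively weakly compact. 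By the convexity established above, the norm closure of $\cD$ equals its weak closure, so it suffices to show $\cD$ is norm-closed in $L^p$; this will make it weakly closed and hence, being bounded in a reflexive space, weakly compact.

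To prove norm-closedness, let $Z^{\th^n}_T\to\xi$ strongly in $L^p$ with $\th^n\in\Th$. Passing to an a.s.\ convergent subsequence, Fatou combined with the negative-moment bound gives $\EE[\xi^{-s}]<\infty$, so $\xi>0$ a.s., while $\EE[\xi]=1$. Consider the positive martingale $\xi_t=\EE[\xi\,|\,\cF_t]$; since $\cF_t=\cF_t^{W,B}$ is Brownian, martingale representation yields $\xi_t=1+\int_0^t\al_s\,dW_s+\int_0^t\be_s\,dB_s$. Upgrading the convergence to $L^2$ through the uniform higher-moment bounds and applying the It\^{o} isometry to $\xi-Z^{\th^n}_T=\int_0^T(\al_s-\al^n_s)\,dW_s+\int_0^T\be_s\,dB_s$, where $\al^n=\th^n Z^{\th^n}$ and the $Z^{\th^n}$ contribute no $dB$ term, forces $\EE\int_0^T\be_s^2\,ds=0$ together with $\al^n\to\al$ in $L^2([0,T]\times\Om)$. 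Thus $\be\equiv0$ and $\xi_t=1+\int_0^t\al_s\,dW_s$ is strictly positive. Finally, from $|\al^n_t|=|\th^n_t|Z^{\th^n}_t\le kZ^{\th^n}_t$ and the a.e.\ convergence (along a further subsequence) of both $\al^n\to\al$ and $Z^{\th^n}\to\xi$, I obtain $|\al_t|\le k\,\xi_t$ a.e.; setting $\th_t=\al_t/\xi_t$ gives $\th\in\Th$ and $d\xi_t=\th_t\xi_t\,dW_t$, so $\xi=Z^\th_T\in\cD$. This establishes closedness, hence weak compactness in $\sigma(L^p,L^{p/(p-1)})$; as the underlying space is a probability space, $L^{1+p/(p-1)}\subset L^{p/(p-1)}$, so the stated topology $\sigma(L^p,L^{1+p/(p-1)})$ is coarser and compactness persists a fortiori.

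The moment estimates and the convexity bookkeeping are routine; the crux is the norm-closedness. Its difficulty is that $\th\mapsto Z^\th_T$ is nonlinear, so weak convergence of the $\th^n$ cannot be propagated to the densities directly. The argument therefore passes through the martingale integrands instead, and the two delicate points I expect to be the real obstacle are (i) ruling out a $dB$-component in the limit and securing strict positivity $\xi>0$, which is needed for the stochastic logarithm $\al/\xi$ to exist, and (ii) showing that the volatility bound $|\th|\le k$ survives the limit; both rest on the It\^{o} isometry and on the a.e.\ convergence extracted from strong $L^2$ convergence.
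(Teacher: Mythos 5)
Your proof is correct, but it takes a genuinely different and far more self-contained route than the paper. The paper's proof is essentially a chain of citations: convexity is quoted from Chen--Epstein (2002, Theorem 2.1), the uniform bound on $\|\tfrac{dQ}{dP}\|_p$ is quoted from a lemma of Tang--Xiong, and weak compactness is then quoted from a theorem in Simons' book. You instead prove all three ingredients directly: your convexity argument (random convex weights $\al Z^{\th^1}_t/Z_t$, $(1-\al)Z^{\th^2}_t/Z_t$ applied to the kernels, plus uniqueness of the Dol\'eans-Dade exponential) is exactly the Chen--Epstein mechanism reproduced from scratch; your exponential-factorization moment bounds replace the cited boundedness lemma; and, most importantly, your martingale-representation argument for norm-closedness of $\{Z^\th_T:\th\in\Th\}$ supplies a step the paper's citation chain leaves implicit --- boundedness in a reflexive $L^p$ gives only \emph{relative} weak compactness, and genuine compactness requires weak closedness, which by Mazur reduces to the norm-closedness you prove. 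That closedness argument (killing the $dB$-component via the It\^o isometry, securing strict positivity of $\xi_t$ from the negative-moment bound and absorption at zero of nonnegative continuous martingales, and passing the bound $|\th|\le k$ to the limit) is the mathematical heart, and it is sound. Two small points you should tighten if this were written out in full: the joint a.e.\ convergence $Z^{\th^n}_t\to\xi_t$ in $(t,\omega)$ used to get $|\al_t|\le k\xi_t$ should be justified by Doob's maximal inequality, $\tilde{\EE}$-free, i.e.\ $\EE\bigl[\sup_{t\le T}|Z^{\th^n}_t-\xi_t|^2\bigr]\le 4\,\EE\bigl[|Z^{\th^n}_T-\xi|^2\bigr]\to0$, before extracting the a.s.\ subsequence; and since $|\th_t|\le k$ is only obtained $dt\times dP$-a.e., one should truncate $\th$ on the exceptional null set so that $\sup_{t}|\th_t|\le k$ holds identically, as the definition of $\Th$ demands (this does not alter the stochastic integral). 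In exchange for its length, your argument buys transparency and independence from the three external references; the paper's version buys brevity at the cost of hiding the closedness issue inside the quoted results.
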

	\begin{proof} 
		The convexity of $\cP$ has been proved in \citet[Theorem 2.1]{chen2002}. Because of the boundedness of $\th$, by \citet[Lemma 4.1]{tang2023stochastic}, the set $\{\frac{dQ}{dP}:Q\in\cP\}$ is uniformly bounded in the norm $\|\cdot\|_{p}$. Then it follows from Theorem 4.1 of Chapter 1 in \citet{simons2008hahn} that the set $\{\frac{dQ}{dP}:Q\in\cP\}$
		is $\si(L^p(\Om,\FF;P),L^{1+\frac{p}{p-1}}(\Om,\FF;P))$-compact. This completes the proof.
	\end{proof}
	
	The convexity of $\cG$ and Lemma \ref{lem-cP-convex} allow us to apply the mini-max theorem (see Theorem B.1.2 in \citet{pham2009}) to the ambiguity filtering problem which can obtain the following theorem immediately.
	\begin{thm}
		Let Hypothesis (H1) hold. Then,
		
		\begin{equation}\label{eq-minimax}
			\min_{v\in\cU_{ad}}\sup_{Q\in\cP}J(v,Q)=\sup_{Q\in\cP}\min_{v\in\cU_{ad}}J(v,Q).
		\end{equation}
	\end{thm}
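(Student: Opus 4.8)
The plan is to read \eqref{eq-minimax} as a saddle-point identity for the payoff $J(v,Q)$ on the product $\cU_{ad}\times\cP$ and to check that $J$ satisfies the hypotheses of the mini-max theorem (Theorem B.1.2 in \citet{pham2009}). The structural ingredients are already at hand: $\cU_{ad}=L^2_\GG(0,T;\RR)$ is convex, and by Lemma \ref{lem-cP-convex} the set $\cP$ is convex while its Radon–Nikodym image $\{dQ/dP:Q\in\cP\}$ is compact in the weak topology $\si(L^p,L^{1+p/(p-1)})$. The two variational properties of $J$ I would verify are: for each fixed $Q$, the map $v\mapsto J(v,Q)$ is convex, since $J(v,Q)=\EE^Q[\int_0^T|f(X_t)-v_t|^2dt]$ and $v_t\mapsto|f(X_t)-v_t|^2$ is convex, with convexity preserved under integration and expectation; and for each fixed $v$, the map $Q\mapsto J(v,Q)$ is affine, hence concave, because
\[
J(v,Q)=\EE^P\left[\frac{dQ}{dP}\int_0^T|f(X_t)-v_t|^2dt\right]
\]
depends linearly on $dQ/dP$.

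Next I would supply the semicontinuity needed to match Pham's hypotheses. By Lemma \ref{lem1011a} it suffices to search over controls with $\|v\|_\infty\le\|f\|_\infty$, so the random variable $G_v\equiv\int_0^T|f(X_t)-v_t|^2dt$ is bounded by $T(2\|f\|_\infty)^2$ and therefore lies in $L^{1+p/(p-1)}(\Om,\FF;P)$, the predual appearing in Lemma \ref{lem-cP-convex}. Consequently $Q\mapsto J(v,Q)=\langle dQ/dP,G_v\rangle$ is continuous, in particular upper semicontinuous, on the weakly compact set $\{dQ/dP:Q\in\cP\}$, which is exactly the regularity required of the compact (supremum) variable. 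On the $\cU_{ad}$ side, the convexity of $v\mapsto J(v,Q)$ together with its strong continuity yields weak lower semicontinuity, the regularity the non-compact (infimum) variable demands.

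With the convex–concave structure, the compactness of the $Q$-variable, and the matching semicontinuity all established, Theorem B.1.2 of \citet{pham2009} applies directly and gives \eqref{eq-minimax}; the left-hand infimum is in fact attained by Theorem \ref{thm1.1}, which justifies writing it as a minimum. The one delicate point, and the step I would treat most carefully, is the placement of $G_v$ in the correct predual: it is precisely the a priori bound from Lemma \ref{lem1011a} that forces $G_v\in L^\infty\subset L^{1+p/(p-1)}$ and thereby renders the linear functional $Q\mapsto\langle dQ/dP,G_v\rangle$ continuous for the weak topology used in Lemma \ref{lem-cP-convex}. Without this reduction to uniformly bounded controls, the integrand need not pair continuously against the weakly convergent densities, and the upper-semicontinuity hypothesis of the mini-max theorem could fail.
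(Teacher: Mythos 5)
Your proposal is correct and takes essentially the same route as the paper: the paper likewise deduces \eqref{eq-minimax} directly from the mini-max theorem (Theorem B.1.2 in \citet{pham2009}), citing the convexity of the admissible set and Lemma \ref{lem-cP-convex} for the convexity of $\cP$ and weak compactness of $\{dQ/dP:Q\in\cP\}$. The only difference is that you explicitly verify the convex--concave structure of $J$ and the semicontinuity hypotheses (via the truncation Lemma \ref{lem1011a}), details the paper declares to follow ``immediately''; this is a sound and welcome elaboration rather than a different argument.
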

	
	Recall that the probability measure $\widetilde{Q}$ defined in \eqref{eq0905b} is absolutely continuous with respect to $Q$ and the Radon-Nikodym derivative $M_t^{-1}$ satisfies the following equation
	
	\begin{equation}\label{eq1102bb}
		dM_t=h(X_t)M_tdY_t,\quad M_0=1.
	\end{equation}
	
	We first fix $\th\in\Th$, and search for the optimal filter. 
	Under the probability measure $\tilde{Q}$ defined in \eqref{eq0905b}, $Y_t$ and $\widetilde{W}_t$
	are independent Brownian motions. Recall that the signal equation can  be rewritten as
	
	\[
	dX_t=(b(X_t)+\si(X_t)\th_t)dt+\si(X_t)d\widetilde{W}_t,\qquad X_0=x.
	\]
	Notice that $X_t$ is dependent with the parameter $\th_t$. To make the discussion clear, in what follows, we use the notation $X^\th_t$ to replace $X_t$. With $\th$ being fixed, we consider the control problem on the right side of \eqref{eq-minimax}. 
	
	\begin{problem*}[MC]
		With $\th$ being fixed in \eqref{def-cP} and the initial state $x\in \RR$ being given, we seek a control $u\in \cU_{ad}$ such that
		
		\[
		J(u,Q)=\inf_{v\in\cU_{ad}} J(v,Q),
		\]
		subject to \eqref{intro-filter} and \eqref{eq1102a}, where $J(\cdot,Q)$ is given by \eqref{def-j}. 
	\end{problem*}	  
	
	As we mentioned in Section \ref{sec2}, when fixed the parameter $\th$, which means the probability measure is fixed, Problem (MC) is equivalent to a classical optimal filtering problem.
	Applying filtering theory (we refer the reader to Chapter 5 in  \citet{X} for more details), 
	the optimal filter is a $\cG_t$-adapted 
	probability measure-valued process $\{\pi^\th_t(\cdot),t\in [0,T]\}$ given by
	
	\begin{equation}\label{eq-pi}
		\pi^\th_t(\phi)=\EE^Q[\phi(X^\th_t)|\cG_t] \mbox{ a.s., }
	\end{equation}
	for any $\phi\in C_b(\RR)$ and $t\in[0,T]$. The optimal control $u$ of Problem (MC) can be solved as
	
	\begin{equation}\label{eq1216a}
		u_t=\pi^\th_t(f).
	\end{equation}
	The innovation process $\nu_t$ defined by
	
	\begin{equation}\label{def-nu}
		\nu_t=Y_t-\int_0^t\pi^\th_s(h)\,ds,
	\end{equation}
	is a $\cG_t$-Brownian motion under probability measure $Q$. Note that the generator of the signal process 
	
	\begin{equation}\label{def-lf}
		L\phi(x)=\phi'(x)(b+\si\th)+\frac12\phi''(x)\si^2,\quad\forall t\in[0,T],\;\forall \phi\in C^2_b(\RR).
	\end{equation}
	
	The following Kushner-FKK equation for the optimal filter is taken from Theorem 5.7 in \citet{X}.
	
	\begin{prop}
		Let $\th$ be fixed in \eqref{def-cP}. Under Hypothesis (H1),  the optimal filter of Problem (MC) satisfies the following equation: for all $\phi\in C^2_b(\RR)$,
		
		\begin{equation}\label{fkk}
			\pi^\th_t(\phi)=\pi_0(\phi)+\int_0^t\pi^\th_s(L\phi)ds+\int_0^t\big(\pi^\th_s(h\phi)-(\pi^\th_sh)(\pi^\th_s\phi)\big)d\nu_s.
		\end{equation}
	\end{prop}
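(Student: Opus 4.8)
The plan is to use the reference-measure (Zakai) route, which dovetails with the measure $\widetilde{Q}$ from \eqref{eq0905b} and the Kallianpur--Striebel ratio already appearing in \eqref{eq-u-frac}. Set the unnormalized filter $\rho^\th_t(\phi)=\tilde{\EE}[\phi(X^\th_t)M_t\mid\cG_t]$, so that Kallianpur--Striebel gives $\pi^\th_t(\phi)=\rho^\th_t(\phi)/\rho^\th_t(1)$ with $\rho^\th_t(1)>0$. The idea is to first derive a linear equation for $\rho^\th_t$ and then obtain \eqref{fkk} by applying It\^o's formula to this ratio. The crucial structural fact, noted after \eqref{eq1102bb}, is that under $\widetilde{Q}$ the process $Y_t$ is a $\cG_t$-Brownian motion independent of the signal noise $\widetilde{W}_t$.

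First I would derive the Zakai equation. Applying It\^o's formula to $\phi(X^\th_t)M_t$ and using $d\phi(X^\th_t)=L\phi(X^\th_t)\,dt+\phi'(X^\th_t)\si(X^\th_t)\,d\widetilde{W}_t$ together with $dM_t=h(X_t)M_t\,dY_t$, the cross-variation term vanishes because $\widetilde{W}$ and $Y$ are orthogonal under $\widetilde{Q}$. Taking $\tilde{\EE}[\,\cdot\mid\cG_t]$, using the independence to kill the $d\widetilde{W}$-martingale contribution while commuting the conditioning with the $dY$-integral, I expect to obtain
\[
d\rho^\th_t(\phi)=\rho^\th_t(L\phi)\,dt+\rho^\th_t(h\phi)\,dY_t,
\]
and in particular $d\rho^\th_t(1)=\rho^\th_t(h)\,dY_t$ by taking $\phi\equiv 1$ (so $L\phi=0$).

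Next I would apply It\^o's formula to $\pi^\th_t(\phi)=\rho^\th_t(\phi)/\rho^\th_t(1)$. Computing the quadratic variation of $\rho^\th_t(1)$ and the covariation of $\rho^\th_t(\phi)$ with $\rho^\th_t(1)$ (both driven by $dY_t$, so with $d[Y]_t=dt$), and normalizing every coefficient through $\rho^\th_t(\cdot)/\rho^\th_t(1)=\pi^\th_t(\cdot)$, one is led to a drift $\pi^\th_t(L\phi)$ plus correction terms, together with a $dY_t$ coefficient $\pi^\th_t(h\phi)-\pi^\th_t(h)\pi^\th_t(\phi)$. Substituting $dY_t=\pi^\th_t(h)\,dt+d\nu_t$ from \eqref{def-nu} converts part of this $dY_t$ term into a drift that exactly cancels the correction terms, leaving precisely \eqref{fkk}.

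The main obstacle is the rigorous derivation of the Zakai equation, namely justifying the interchange of the conditional expectation $\tilde{\EE}[\,\cdot\mid\cG_t]$ with the stochastic integrals against $d\widetilde{W}_t$ and $dY_t$. This is where the conditional-independence structure under $\widetilde{Q}$ must be used carefully, via a conditional Fubini / conditional It\^o argument, and where integrability of $M_t$ (guaranteed by the boundedness of $h$ in (H1) and Novikov's condition, already invoked after \eqref{eq0905b}) is needed to make all the conditional expectations and local martingales genuinely integrable. An alternative is the Fujisaki--Kallianpur--Kunita innovations approach: show that $\pi^\th_t(\phi)-\pi_0(\phi)-\int_0^t\pi^\th_s(L\phi)\,ds$ is a $\cG_t$-martingale, represent it as a stochastic integral against the innovation $\nu_t$, and identify the integrand by a covariation computation; there the delicate point is instead the representation step, which requires that $\nu$ generates the observation filtration. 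Since the statement is quoted verbatim from Theorem~5.7 in \citet{X}, either route may simply be cited, but the reference-measure argument above is the most direct given the objects already in play.
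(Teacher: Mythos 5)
Your proposal is correct, but it is worth noting that the paper itself gives no proof of this proposition at all: it simply quotes the Kushner--FKK equation from Theorem~5.7 in \citet{X}. What you supply is the standard reference-measure derivation — Zakai equation for $\rho^\th_t$ first, then It\^o's formula applied to the Kallianpur--Striebel ratio $\pi^\th_t(\phi)=\rho^\th_t(\phi)/\rho^\th_t(1)$, then the substitution $dY_t=\pi^\th_t(h)\,dt+d\nu_t$ — and the algebra checks out: the It\^o correction terms $\pi^\th_t(\phi)\pi^\th_t(h)^2\,dt-\pi^\th_t(h\phi)\pi^\th_t(h)\,dt$ are exactly offset by the drift produced when the $dY_t$ coefficient $\pi^\th_t(h\phi)-\pi^\th_t(h)\pi^\th_t(\phi)$ is multiplied by $\pi^\th_t(h)\,dt$, leaving \eqref{fkk}. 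Your route also meshes naturally with the paper's subsequent development, which introduces $\widetilde Q$, $M_t$, $\rho^\th_t$, the Zakai equation \eqref{zakai} and the ratio formula \eqref{eq1216b} anyway — in effect you reverse the paper's order of presentation (Zakai before FKK) and thereby make the FKK proposition a consequence rather than an imported fact. Two caveats, both of which you essentially flag: first, the interchange of $\tilde{\EE}[\,\cdot\mid\cG_t]$ with the stochastic integrals is the genuine technical content; since $\th_t$ is only $\cF_t$-adapted, the signal $X^\th$ need not be independent of $Y$ under $\widetilde Q$, so the vanishing of the projected $d\widetilde W$-integral must be justified by orthogonality $[\widetilde W,Y]\equiv 0$ (e.g.\ by testing against Dol\'eans exponentials of $Y$, which are total in $L^2(\cG_t)$) rather than by literal independence of signal and observation. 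Second, applying It\^o to the ratio requires $\rho^\th_t(1)>0$ and suitable integrability, which hold here because $h$ is bounded under (H1). With those points made rigorous (or with the citation to \citet{X} retained for them), your argument is a complete and self-contained proof, which is strictly more than the paper provides.
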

	
	Let $\cM_F(\RR)$ denote the space of all finite Borel measures on $\RR$. Define the $\cM_F(\RR)$-valued process $\{\rho^\th_t,t\in [0,T]\}$ on stochastic basis $(\Om,\cF,\tilde{Q},\cG_t)$ by 
	
	\begin{equation}\label{def-rho}
		\rho^\th_t(\phi)\equiv\tilde{\EE}[M_t\phi(X^\th_t)|\cG_t]\quad \forall t\in[0,T],\;\forall \phi\in C_b(\RR),
	\end{equation}
	where $M_t$ is defined in \eqref{eq1102bb} and $\tilde{\EE}$ is the expectation with respect to $\tilde{Q}$. $\rho^\th_t$ is known as the unnormalized filter. 
	Applying It\^{o}'s formula to $M_t\phi(X^\th_t)$ we can immediately arrive at the following Zakai equation.
	
	\begin{prop}
		Let $\th$ be fixed in \eqref{def-cP}. Under Hypothesis (H1), the unnormalized filter $\rho^\th_t$ satisfies the following equation: $\forall\phi \in C^2_b(\RR)$,
		
		\begin{equation}\label{zakai}
			\rho^\th_t(\phi)=\rho_0(\phi)+\int_0^t \rho^\th_s(L\phi)ds+\int_0^t\rho^\th_s(h\phi)dY_s.  
		\end{equation}
	\end{prop}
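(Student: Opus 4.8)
The plan is to derive \eqref{zakai} by applying It\^o's formula to the product $M_t\phi(X^\th_t)$ and then taking the conditional expectation $\tilde{\EE}[\,\cdot\,|\cG_t]$ under $\tilde{Q}$. First I would apply It\^o's formula to $\phi(X^\th_t)$ using the signal dynamics \eqref{eq1102a}, obtaining
\[
d\phi(X^\th_t)=L\phi(X^\th_t)\,dt+\phi'(X^\th_t)\si(X^\th_t)\,d\widetilde{W}_t,
\]
with $L$ the generator in \eqref{def-lf}. Since $\phi\in C^2_b(\RR)$ and $\si_x$ is bounded by Hypothesis (H1) (so $\si$ grows at most linearly), each integrand here is square-integrable in view of the finite moments of $X^\th$.

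Next I would combine this with $dM_t=h(X^\th_t)M_t\,dY_t$ from \eqref{eq1102bb} through the product rule. The crucial simplification is that, under $\tilde{Q}$, the processes $Y_t$ and $\widetilde{W}_t$ are independent Brownian motions, so the quadratic cross-variation $\langle M,\phi(X^\th)\rangle$ vanishes. This gives
\[
M_t\phi(X^\th_t)=\phi(x)+\int_0^t M_sL\phi(X^\th_s)\,ds+\int_0^t M_s\phi'(X^\th_s)\si(X^\th_s)\,d\widetilde{W}_s+\int_0^t M_s h(X^\th_s)\phi(X^\th_s)\,dY_s.
\]

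Finally I would take $\tilde{\EE}[\,\cdot\,|\cG_t]$ of both sides, with $\cG_t=\si(Y_s:s\le t)$. The term driven by $d\widetilde{W}$ is a martingale whose conditional expectation given $\cG_t$ vanishes, because $\widetilde{W}$ is independent of $Y$ under $\tilde{Q}$ and its integrand is square-integrable. For the $ds$-integral a conditional Fubini argument yields $\int_0^t \tilde{\EE}[M_sL\phi(X^\th_s)|\cG_s]\,ds=\int_0^t\rho^\th_s(L\phi)\,ds$. The only delicate point is the observation-driven integral, for which one must justify
\[
\tilde{\EE}\!\left[\int_0^t M_s h(X^\th_s)\phi(X^\th_s)\,dY_s\,\Big|\,\cG_t\right]=\int_0^t\tilde{\EE}[M_s h(X^\th_s)\phi(X^\th_s)\,|\,\cG_s]\,dY_s=\int_0^t\rho^\th_s(h\phi)\,dY_s,
\]
recalling the definition \eqref{def-rho} of $\rho^\th$ and that $\rho_0(\phi)=\phi(x)$.

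The main obstacle is precisely this last interchange of the conditional expectation with the stochastic integral against the observation $Y$. I would establish it by first approximating the integrand by simple processes that are constant on time-grid subintervals, for which the identity reduces to the tower property together with the $\cG_s$-measurability of the relevant increments of $Y$, and then passing to the limit. The limit is controlled by the boundedness of $h$ (Hypothesis (H1)) and the fact that $M_s$ possesses finite moments of all orders (Novikov's condition, noted after \eqref{eq0905b}), which furnish the $L^2(\tilde{Q})$-convergence needed to commute the limit with both the It\^o integral and the conditional expectation. Collecting the three surviving terms then yields \eqref{zakai} for every $\phi\in C^2_b(\RR)$.
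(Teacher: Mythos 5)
Your proposal is correct and follows essentially the same route as the paper, whose entire proof consists of the remark that one applies It\^o's formula to $M_t\phi(X^\th_t)$ and arrives at \eqref{zakai} ``immediately.'' The details you supply --- the vanishing of the cross-variation $\langle M,\phi(X^\th)\rangle$ since $\langle Y,\widetilde{W}\rangle=0$, the killing of the $d\widetilde{W}$-integral under $\tilde{\EE}[\,\cdot\,|\cG_t]$, and the simple-process approximation justifying the interchange of conditional expectation with the $dY$-integral --- are exactly the standard filtering-theory lemmas (cf.\ Chapter 5 of the book of Xiong cited in the paper) that the authors implicitly invoke.
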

	
	\begin{rem}
		According to Theorem 2.21 in \citet{lucic2001}, for each $\th$ fixed in \eqref{def-cP}, namely, for each $Q\in\cP$, the normalized filter equation \eqref{fkk} has the property of uniqueness in law and the unnormalized filter equation \eqref{zakai} has the property of both pathwise uniqueness and uniqueness in law.
	\end{rem}
	
	In virtue of Kallianpur-Striebel formula, for fixed $\th\in\Th$, the optimal control $u$ of Problem (MC) given by \eqref{eq1216a} can also be represented as
	
	\begin{equation}\label{eq1216b}
		u_t=\pi^\th_t(f)=\frac{\rho^\th_t(f)}{\rho^\th_t(1)}.
	\end{equation}
	Plugging it into \eqref{eq-minimax}, Problem(O) then is converted from a mini-max problem into a weighed conditional mean-field optimal control problem with the control $\th\in\Th$, the cost functional 
	
	\begin{equation}\label{J-th}
		J(\th)=-\tilde{\EE}\left[\frac12\int^T_0\left|f(X^\th_t)-\frac{\tilde{\EE}[f(X_t^\th)M_t|\cG_t]}{\tilde{\EE}[M_t|\cG_t]}\right|^2M_tdt\right],
	\end{equation}
	and state process $(X^\th_t,M_t)$ satisfying (\ref{eq1102a}, \ref{eq1102bb}). Note that we have put the factor $-\frac12$ to switch the maximization problem to the minimization one. By \citet[Lemma 4.1]{tang2023stochastic}, for each $\th\in \Th$, the weighted state equations  (\ref{eq1102a}, \ref{eq1102bb}) admit a unique solution $(X_t^\th,M_t)\in L^2_{\FF}(0,T;\RR^{2})$.
	
	Suppose that $\th_t$ is the optimal control that minimizes the cost functional \eqref{J-th}, and $(X^\th_t,M_t)$ is the corresponding optimal state. Let $v_t$ be such that $\th_t+v_t\in\Th$. For any $\ep\in(0,1)$, by the convexity of $\Th$, we see that $\th_t+\ep v_t\in\Th$. We denote $(X^{\th+\ep v}_t,M^{\th+\ep v}_t)$ as the solution of (\ref{eq1102a}, \ref{eq1102bb}) along with the control $\th_t+\ep v_t$. We now present the the convergence of $(X^{\th+\ep v}_t,M^{\th+\ep v}_t)$ to $(X^\th_t,M_t)$ and establish the convergence rate. As the result can be readily obtained, we shall state it without including the proof.
	\begin{lemma}
		Let Hypothesis (H1) hold, then there exists a constant $K>0$ such that
		
		\[
		\tilde{\EE}\big[|X^{\th+\ep v}_t-X^\th_t|^2\big]+\Big(\tilde{\EE}\big[|M^{\th+\ep v}_t-M_t|\big]\Big)^2\le K\ep^2.
		\]
	\end{lemma}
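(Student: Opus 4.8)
The plan is to control the two differences separately: a strong $L^2$ (and, as an auxiliary step, $L^4$) stability estimate for the signal difference $\de X_t:=X^{\th+\ep v}_t-X^\th_t$, followed by a linear $L^2$ estimate for the weight difference $\de M_t:=M^{\th+\ep v}_t-M_t$, which feeds on the signal estimate. All expectations are under $\widetilde Q$, where $\widetilde W$ and $Y$ are independent Brownian motions, so It\^o's formula applies directly to (\ref{eq1102a}) and (\ref{eq1102bb}). Since $\th,\th+v\in\Th$ we have $|v_t|\le 2k$ and, by convexity, $\th+\ep v\in\Th$, so every control appearing below is bounded. The argument rests on two moment bounds. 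Because Hypothesis (H1) gives only $\si_x$ bounded, $\si$ has at most linear growth, and the classical Lipschitz/linear-growth SDE estimates applied to (\ref{eq1102a}) yield, uniformly in $\ep\in(0,1)$, $\sup_{t\in[0,T]}\tilde{\EE}[|X^{\th+\ep v}_t|^p]\le C_p$ for every $p\ge2$; the constant is uniform precisely because $|\th+\ep v|\le k$. Likewise, since $h$ is bounded, $M_t$ has exponential moments, so $\sup_t\tilde{\EE}[M_t^p]\le C_p$ for every $p$ (and the same for $M^{\th+\ep v}$).

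I would treat $\de X$ first. Writing the drift difference as $(b(X^{\th+\ep v}_t)-b(X^\th_t))+(\si(X^{\th+\ep v}_t)-\si(X^\th_t))\th_t+\ep\,\si(X^{\th+\ep v}_t)v_t$, the first two brackets are bounded by $C|\de X_t|$ by the Lipschitz property and $|\th_t|\le k$, while the genuine perturbation $\ep\,\si(X^{\th+\ep v}_t)v_t$ is $O(\ep)$ in every $L^p$ by the linear growth of $\si$ and the moment bound. Applying It\^o to $|\de X_t|^2$, taking expectation, and treating the quadratic-variation term by the Lipschitz property of $\si$ gives
\[
\tilde{\EE}\big[|\de X_t|^2\big]\le C\int_0^t\tilde{\EE}\big[|\de X_s|^2\big]ds+C\ep^2\int_0^T\tilde{\EE}\big[\si(X^{\th+\ep v}_s)^2\big]ds,
\]
so Gronwall's inequality yields $\tilde{\EE}[|\de X_t|^2]\le K_1\ep^2$. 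Repeating the computation with $|\de X_t|^4$ (using the Burkholder--Davis--Gundy inequality for the martingale term) gives in the same fashion $\tilde{\EE}[|\de X_t|^4]\le K_1'\ep^4$, which is what the weight estimate requires.

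Finally I would estimate $\de M$. Subtracting the two copies of (\ref{eq1102bb}) shows that $\de M$ solves the driftless linear equation
\[
d\de M_t=\Big(h(X^{\th+\ep v}_t)\de M_t+\big(h(X^{\th+\ep v}_t)-h(X^\th_t)\big)M_t\Big)dY_t,\qquad \de M_0=0.
\]
Applying It\^o to $|\de M_t|^2$ and taking expectation (the stochastic integral is a martingale under $\widetilde Q$ after a routine localization) removes the drift and leaves, using $h$ bounded and Lipschitz,
\[
\tilde{\EE}\big[|\de M_t|^2\big]\le C\int_0^t\tilde{\EE}\big[|\de M_s|^2\big]ds+C\int_0^t\tilde{\EE}\big[|\de X_s|^2 M_s^2\big]ds.
\]
By Cauchy--Schwarz and the moment bounds, $\tilde{\EE}[|\de X_s|^2M_s^2]\le(\tilde{\EE}[|\de X_s|^4])^{1/2}(\tilde{\EE}[M_s^4])^{1/2}\le C\ep^2$, so Gronwall gives $\tilde{\EE}[|\de M_t|^2]\le K_2\ep^2$; Jensen's inequality then yields $(\tilde{\EE}[|\de M_t|])^2\le\tilde{\EE}[|\de M_t|^2]\le K_2\ep^2$. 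Adding the two estimates proves the claim with $K=K_1+K_2$.

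The only real subtlety I expect is bookkeeping around the unboundedness of $\si$: it rules out a one-line Gronwall argument and forces the uniform moment bounds, and the product structure $h(X)M$ in the weight equation is exactly what makes the $L^4$ --- rather than merely $L^2$ --- estimate on $\de X$ necessary. Once the moment bounds are in place (available because the controls and $h$ are bounded), the remaining computations are routine.
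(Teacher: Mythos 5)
Your proof is correct. Note that the paper states this lemma \emph{without} proof---the authors remark that ``the result can be readily obtained''---so there is no argument to compare against; what you supply is precisely the routine argument they have in mind: It\^o--Gronwall stability estimates for $X^{\th+\ep v}-X^\th$ using the uniform-in-$\ep$ moment bounds guaranteed by $|\th_t+\ep v_t|\le k$ and the linear growth of $\si$, followed by a linear SDE estimate for the weight difference, where your auxiliary $L^4$ bound on the signal difference correctly handles the product term $\tilde{\EE}\big[|X^{\th+\ep v}_s-X^\th_s|^2M_s^2\big]$ via Cauchy--Schwarz and the exponential moments of $M$. Your conclusion is in fact slightly stronger than what is asserted, since you bound $\tilde{\EE}\big[|M^{\th+\ep v}_t-M_t|^2\big]$ rather than only $\big(\tilde{\EE}\big[|M^{\th+\ep v}_t-M_t|\big]\big)^2$, with Jensen's inequality closing the gap.
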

	Define $(X^1_t,M^1_t)$ by the following variational equation: for any $v_t\in L^2_\FF(0,T;\RR)$,
	
	\begin{equation}\label{eq-x1m1}
		\left\{
		\begin{aligned}
			dX^1_t=&\,\bigl((b'(X^\th_t)+\si'(X^\th_t)\th_t)X^1_t+\si(X^\th_t)v_t\bigr)dt+\si'(X^\th_t)X^1_td\widetilde{W}_t,\\
			dM^1_t=&\,-h'(X^\th_t)h(X^\th_t)M_tX^1_tdt+\bigl(h(X^\th_t)M_t^1-h'(X^\th_t)M_tX_t^1\bigr)dY_t,\\
			X^1_0=&\,0,\quad M^1_0=0.
		\end{aligned}
		\right.
	\end{equation}
	For $v$ being fixed, under Hypothesis (H1), it follows from \citet[Proposition 2.1]{sun2014linear} that the variational equation \eqref{eq-x1m1} admits a unique pair of solutions $(X^1_t,M^1_t)\in L^2_\FF(0,T;\RR^{2})$. The following result can be estimated by a similar approach to \citet[Lemma 5.2]{tang2023stochastic}, which is stated without proof. 
	\begin{lemma}
		Let Hypothesis (H1) hold and 
		
		\[
		\chi^\ep_t=\ep^{-1}\bigl(\chi^{\th+\ep v}_t-\chi_t\bigr)-\chi^1_t,
		\]
		where $\chi=X,M$, then
		
		\[
		\lim_{\ep\to0}\tilde{\EE}\bigg[\int_0^T\Bigl(|X^\ep_t|^2+|M^\ep_t|^2\Bigr)dt\bigg]=0.
		\]
	\end{lemma}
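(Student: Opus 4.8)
The plan is to write down the stochastic differential equations satisfied by the error processes $X^\ep_t$ and $M^\ep_t$, to separate a linear self-interaction with uniformly bounded coefficients from a genuinely small remainder, and then to close everything by Gronwall's inequality. Setting $\delta\chi_t=\chi^{\th+\ep v}_t-\chi_t$ for $\chi=X,M$ and invoking the fundamental theorem of calculus, coefficient increments factor as
\[
b(X^{\th+\ep v}_t)-b(X^\th_t)=\Big(\int_0^1 b'\!\big(X^\th_t+\la\,\delta X_t\big)\,d\la\Big)\delta X_t,
\]
and similarly for $\si$ and $h$. First I would subtract the state equation (\ref{eq1102a}) for $\th$ from the one for $\th+\ep v$, divide by $\ep$, and subtract the variational equation (\ref{eq-x1m1}). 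Since $\ep^{-1}\delta X_t=X^\ep_t+X^1_t$, the equation for $X^\ep_t$ then has drift and diffusion that are linear in $X^\ep_t$ with uniformly bounded coefficients (for instance the drift coefficient $\int_0^1 b'(X^\th_t+\la\delta X_t)\,d\la+\th_t\int_0^1\si'(X^\th_t+\la\delta X_t)\,d\la$, bounded by Hypothesis (H1)), plus a remainder collecting $\big(\int_0^1 b'(X^\th_t+\la\delta X_t)\,d\la-b'(X^\th_t)\big)X^1_t$, its $\si'$-analogue, and $\big(\si(X^{\th+\ep v}_t)-\si(X^\th_t)\big)v_t$. The same differencing applied to (\ref{eq1102bb}) and the $M$-line of (\ref{eq-x1m1}) gives an equation for $M^\ep_t$ whose principal part is linear in $M^\ep_t$ (coefficient bounded by $\|h\|_\infty$) but which couples to $X^\ep_t$ through an $M_t$-weighted coefficient, again with a remainder built from increments of $h$ and $h'$ against $M_t$, $X^1_t$ and $M^1_t$.

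Second I would show the remainders vanish in $L^2([0,T]\times\Om;\tilde Q)$. The preceding lemma gives $\tilde\EE[|\delta X_t|^2]\le K\ep^2$, so $\delta X_t\to0$; by continuity of $b',\si',h'$ the averaged derivatives converge to their values at $X^\th_t$, and their uniform boundedness (H1) furnishes a dominating function, so dominated convergence yields $L^2$-convergence of each such product against the fixed $L^2$-processes $X^1_t$ and $M^1_t$. The term $\big(\si(X^{\th+\ep v}_t)-\si(X^\th_t)\big)v_t$ is $O(\ep)$ in $L^2$: the Lipschitz bound $|\si(X^{\th+\ep v}_t)-\si(X^\th_t)|\le C|\delta X_t|$ combines with the boundedness of $v$ (since $\th,\th+v\in\Th$ force $|v_t|\le 2k$) and the $K\ep^2$ estimate.

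Third, with the remainders controlled, I would apply It\^o's formula to $|X^\ep_t|^2$, take $\tilde\EE$, and use the boundedness of the linear coefficients to reach $\tilde\EE[|X^\ep_t|^2]\le C\int_0^t\tilde\EE[|X^\ep_s|^2]\,ds+\eta_X(\ep)$ with $\eta_X(\ep)\to0$; Gronwall then gives $\sup_{t\le T}\tilde\EE[|X^\ep_t|^2]\to0$. Feeding this bound into the $M^\ep_t$ equation and repeating the It\^o--Gronwall argument yields $\sup_{t\le T}\tilde\EE[|M^\ep_t|^2]\to0$, and integrating in $t$ gives the assertion.

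The main obstacle is integrability in the $M$-component. The previous lemma controls only $(\tilde\EE[|\delta M_t|])^2$, an $L^1$-type quantity, whereas the $L^2$ estimate for $M^\ep_t$ must absorb the $M_t$-weighted coupling term (of the form $h'(X^\th_t)M_t X^\ep_t$) and the $M_t$-weighted remainder. Here the boundedness of $h$ under (H1) is decisive: it makes $M_t$, $M^{\th+\ep v}_t$ and $M^1_t$ exponential martingales with all moments bounded uniformly in $\ep$, and the linear variational equation with bounded coefficients and bounded $v$ likewise gives $X^1_t$ and (uniformly in $\ep$) $X^\ep_t$ higher-moment bounds. H\"older's inequality then trades these moments so that the $L^2$-smallness of $X^\ep_t$ survives multiplication by $M_t$, for example via $\tilde\EE[\,M_t^2|X^\ep_t|^2\,]\le(\tilde\EE[|X^\ep_t|^2])^{1/2}(\tilde\EE[M_t^8])^{1/4}(\tilde\EE[|X^\ep_t|^4])^{1/4}$, whose first factor tends to $0$ while the remaining factors stay bounded; the Gronwall step for $M^\ep_t$ thus closes despite the multiplicative, exponential structure of the weight $M_t$.
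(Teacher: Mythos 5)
The paper offers no proof of this lemma --- it is stated with only a pointer to the method of \citet[Lemma 5.2]{tang2023stochastic} --- so there is nothing to compare line by line; judged on its own, your route (difference the state equations against the variational ones, isolate a part that is linear in $X^\ep_t,M^\ep_t$ with uniformly bounded coefficients, show the remainder is small, then It\^o plus Gronwall, first for $X^\ep$ and then for $M^\ep$) is exactly the standard argument that citation carries out, and you identify the genuine difficulty correctly: the preceding lemma controls $M^{\th+\ep v}_t-M_t$ only in $L^1(\tilde Q)$, while the $M^\ep$-equation is forced by terms carrying the exponential weight $M_t$. Your resolution --- all moments of $M_t$, $M^1_t$, $X^1_t$ are finite because $h,h',b',\si'$ are bounded and $|v_t|\le 2k$, a uniform-in-$\ep$ $L^4$ bound for $X^\ep_t$, and the three-factor H\"older step $\tilde{\EE}[M_t^2|X^\ep_t|^2]\le(\tilde{\EE}[|X^\ep_t|^2])^{1/2}(\tilde{\EE}[M_t^8])^{1/4}(\tilde{\EE}[|X^\ep_t|^4])^{1/4}$ --- is sound, granted one ingredient you use but do not state: the uniform $L^4$ bound on $X^\ep_t$ rests on a fourth-moment version of the preceding lemma, $\tilde{\EE}[|X^{\th+\ep v}_t-X^\th_t|^4]\le K\ep^4$, which follows from the same standard estimate and should be recorded.

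The one step you assert rather than verify is precisely the delicate one: that differencing \eqref{eq1102bb} against the $M$-line of \eqref{eq-x1m1} leaves only vanishing remainders. Carry out the differencing. With $\delta X_t=X^{\th+\ep v}_t-X^\th_t$, $\delta M_t=M^{\th+\ep v}_t-M_t$ and $\tilde h'_t=\int_0^1h'\bigl(X^\th_t+\la\,\delta X_t\bigr)d\la$, one finds
\[
d\bigl(\ep^{-1}\delta M_t\bigr)=\Bigl(h(X^{\th+\ep v}_t)\,\ep^{-1}\delta M_t+\tilde h'_t\,M_t\,\ep^{-1}\delta X_t\Bigr)dY_t,
\]
so the remainder vanishes only if the linearization is
\[
dM^1_t=\bigl(h(X^\th_t)M^1_t+h'(X^\th_t)M_tX^1_t\bigr)dY_t,\qquad M^1_0=0,
\]
with a plus sign and no drift term (this pathwise identity is independent of the measure used). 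That is \emph{not} the $M$-line of \eqref{eq-x1m1} as printed, which carries $-h'(X^\th_t)M_tX^1_t$ in the $dY_t$-coefficient and the drift $-h'(X^\th_t)h(X^\th_t)M_tX^1_t\,dt$. Subtracting the printed equation instead leaves, besides terms that do vanish, the forcing $\bigl(\tilde h'_t+h'(X^\th_t)\bigr)M_tX^1_t\,dY_t+h'(X^\th_t)h(X^\th_t)M_tX^1_t\,dt$, whose diffusion part tends to $2h'(X^\th_t)M_tX^1_t$; with such non-vanishing forcing the stated limit fails whenever $h'M X^1\not\equiv0$. In short: either \eqref{eq-x1m1} contains sign errors --- the direct differencing above indicates it does --- and your proof is correct for the corrected equation, whose remainder has exactly the structure you describe; or the lemma is false as literally printed. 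Your write-up must perform this differencing explicitly and confront the discrepancy rather than asserting the cancellation.
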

	The next lemma is concerned with the perturbation of the cont functional defined in \eqref{J-th} with respect to the parameter $\ep$. For simplifying the notation, we define 
	
	\[
	l(t)=-\frac12\left|f(X^\th_t)-\frac{\tilde{\EE}[f(X_t^\th)M_t|\cG_t]}{\tilde{\EE}[M_t|\cG_t]}\right|^2M_t,
	\]
	and $l_x(t),l_m(t),l_{\rho_1}(t),l_{\rho_2}(t)$ as the corresponding partial derivation of $l$ with respect to $X^\th_t$, $M_t$, $\tilde{\EE}[f(X_t^\th)M_t|\cG_t]$, and $\tilde{\EE}[M_t|\cG_t]$, respectively, given by
	
	\begin{equation}\label{eq-lm}
		\left\{
		\begin{aligned}
			l_x(t)&=-f'(X^\th_t)M_t\biggl(f(X^\th_t)-\frac{\tilde{\EE}[f(X_t^\th)M_t|\cG_t]}{\tilde{\EE}[M_t|\cG_t]}\biggr),\\
			l_m(t)&=-\frac12\left(f(X^\th_t)-\frac{\tilde{\EE}[f(X_t^\th)M_t|\cG_t]}{\tilde{\EE}[M_t|\cG_t]}\right)^2,\\
			l_{\rho_1}(t)&=\frac{M_t}{\tilde{\EE}[M_t|\cG_t]}\biggl(f(X^\th_t)-\frac{\tilde{\EE}[f(X_t^\th)M_t|\cG_t]}{\tilde{\EE}[M_t|\cG_t]}\biggr),\\
			l_{\rho_2}(t)&=-M_t\biggl(f(X^\th_t)-\frac{\tilde{\EE}[f(X_t^\th)M_t|\cG_t]}{\tilde{\EE}[M_t|\cG_t]}\biggr)\frac{\tilde{\EE}[f(X_t^\th)M_t|\cG_t]}{\tilde{\EE}[M_t|\cG_t]^2}.
		\end{aligned}\right.
	\end{equation}
	By Hypothesis (H1) and Lemma \ref{lem-cP-convex}, we can derive that $l_x,l_m,l_{\rho_1},l_{\rho_2}\in L^p(0,T;\RR)$ for any $p\ge1$.
	\begin{lemma}
		Let Hypothesis (H1) hold, then
		
		\begin{align}\label{eq-j-ep=0}
			\frac{d}{d\ep}J(\th+\ep v)\Big|_{\ep=0}
			=\tilde{\EE}\bigg[\int_0^T\Big(l_x(t)X^1_t+l_m(t)M^1_t\Big)dt\bigg].
		\end{align}
	\end{lemma}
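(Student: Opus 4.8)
The strategy is to view $J(\th)=\tilde{\EE}\big[\int_0^T l(t)\,dt\big]$, where the integrand $l(t)$ depends on the control $\th$ only through the four quantities $X^\th_t$, $M_t$, $\rho_1(t)\equiv\tilde{\EE}[f(X^\th_t)M_t|\cG_t]$ and $\rho_2(t)\equiv\tilde{\EE}[M_t|\cG_t]$, and to differentiate by a first-order expansion in these arguments. Denoting by $l^\ep(t)$, $\rho_1^\ep(t)$, $\rho_2^\ep(t)$ the objects built from the perturbed control $\th+\ep v$, I would Taylor expand
\[
l^\ep(t)-l(t)=l_x(t)\big(X^{\th+\ep v}_t-X^\th_t\big)+l_m(t)\big(M^{\th+\ep v}_t-M_t\big)+l_{\rho_1}(t)\big(\rho_1^\ep(t)-\rho_1(t)\big)+l_{\rho_2}(t)\big(\rho_2^\ep(t)-\rho_2(t)\big)+o(\ep),
\]
with $l_x,l_m,l_{\rho_1},l_{\rho_2}$ as in \eqref{eq-lm}. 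Dividing by $\ep$ and letting $\ep\to0$, the two preceding lemmas give $\ep^{-1}(X^{\th+\ep v}_t-X^\th_t)\to X^1_t$ and $\ep^{-1}(M^{\th+\ep v}_t-M_t)\to M^1_t$ in $L^2([0,T]\times\Om)$; combined with the boundedness of $f,f'$ from (H1) and the $L^1$-contraction of conditional expectation, this yields $\ep^{-1}(\rho_1^\ep-\rho_1)\to\tilde{\EE}[f'(X^\th_t)M_tX^1_t+f(X^\th_t)M^1_t|\cG_t]=:\dot\rho_1(t)$ and $\ep^{-1}(\rho_2^\ep-\rho_2)\to\tilde{\EE}[M^1_t|\cG_t]=:\dot\rho_2(t)$. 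Using the $L^p$-integrability of the coefficients to dominate and interchange the limit with $\tilde{\EE}\int_0^T$, I obtain
\[
\frac{d}{d\ep}J(\th+\ep v)\Big|_{\ep=0}=\tilde{\EE}\bigg[\int_0^T\Big(l_x(t)X^1_t+l_m(t)M^1_t+l_{\rho_1}(t)\dot\rho_1(t)+l_{\rho_2}(t)\dot\rho_2(t)\Big)dt\bigg].
\]

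The decisive point is that the last two terms vanish. Since $\dot\rho_1(t)$ and $\dot\rho_2(t)$ are $\cG_t$-measurable, the tower property gives $\tilde{\EE}[l_{\rho_i}(t)\dot\rho_i(t)]=\tilde{\EE}\big[\tilde{\EE}[l_{\rho_i}(t)|\cG_t]\,\dot\rho_i(t)\big]$ for $i=1,2$, so it suffices to show $\tilde{\EE}[l_{\rho_1}(t)|\cG_t]=\tilde{\EE}[l_{\rho_2}(t)|\cG_t]=0$. Writing $u_t=\rho_1(t)/\rho_2(t)$ for the optimal filter \eqref{eq-u-frac}, each of $l_{\rho_1}(t)$ and $l_{\rho_2}(t)$ in \eqref{eq-lm} carries the common factor $M_t\big(f(X^\th_t)-u_t\big)$, and since $u_t$ is $\cG_t$-measurable,
\[
\tilde{\EE}\big[M_t\big(f(X^\th_t)-u_t\big)\,\big|\,\cG_t\big]=\tilde{\EE}[f(X^\th_t)M_t|\cG_t]-u_t\,\tilde{\EE}[M_t|\cG_t]=\rho_1(t)-\frac{\rho_1(t)}{\rho_2(t)}\rho_2(t)=0.
\]
Hence $\tilde{\EE}[l_{\rho_1}(t)|\cG_t]=\tilde{\EE}[l_{\rho_2}(t)|\cG_t]=0$, the mean-field terms drop out, and only $l_x(t)X^1_t+l_m(t)M^1_t$ survives, which is \eqref{eq-j-ep=0}. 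This cancellation is the analytic manifestation of the self-consistency of the conditional mean-field structure: because the optimal estimate is itself the conditional mean, the first-order sensitivity of the cost to the mean-field arguments is orthogonal to $\cG_t$.

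I expect the main obstacle to be the rigorous passage to the limit rather than the cancellation. One must control the remainder $o(\ep)$ uniformly and establish the uniform integrability required for dominated convergence; the quantitative $L^2$-rate in the first lemma, the vanishing of the second-order term in the second lemma, and the $L^p$-integrability of $l_x,l_m,l_{\rho_1},l_{\rho_2}$ are precisely the inputs that make this step go through, while the tower-property cancellation is then immediate.
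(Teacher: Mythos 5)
Your proposal is correct and follows essentially the same route as the paper: the paper omits the computation but states precisely the two ingredients you develop, namely the first-order expansion of $J(\th+\ep v)$ using the variational processes $(X^1_t,M^1_t)$ and the key observation $\tilde{\EE}[l_{\rho_1}(t)|\cG_t]=\tilde{\EE}[l_{\rho_2}(t)|\cG_t]=0$, which kills the mean-field terms via the tower property. Your write-up is in effect the detailed version of the argument the paper declares ``can be obtained immediately after some derivative calculations.''
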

	Note that $\tilde{\EE}[l_{\rho_1}(t)|\cG_t]=\tilde{\EE}[l_{\rho_2}(t)|\cG_t]=0$. Plugging $X^{\th+\ep v}_t$, $M^{\th+\ep v}_t$, and $\th_t+\ep v_t$ into \eqref{J-th}, the result above can be obtained immediately after some derivative calculations, so we omit it.
	
	Recall the adjoint processes $(p_t,q_t,P_t,Q_t)$ are introduced in \eqref{eq0905a}. 
	In view of Hypothesis (H1) and \citet[Proposition 2.1]{sun2014linear}, once $X^\th_t$ and $\th_t$ are determined, the adjoint equation \eqref{eq0905a} admits a unique solution $(p_t,q_t,P_t,Q_t)\in L^2_\FF(0,T;\RR^{4})$. Now we are ready to estimate the optimal control $\th\in\Th$.
	\begin{thm}
		Let Hypothesis (H1) hold. Suppose $\th_t\in\Th$ is the optimal control that minimizes the cost functional defined in \eqref{J-th} and $X^\th_t$ is the corresponding optimal state. Then we have 
		
		\begin{equation*}
			\th_t=k\mbox{ sgn }(P_t).
		\end{equation*}
	\end{thm}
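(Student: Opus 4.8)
The plan is to establish a Pontryagin-type first-order condition for the minimizer $\th$ of \eqref{J-th} over the convex set $\Th$, and then to read off the bang-bang form from the resulting variational inequality. The starting point is the directional derivative \eqref{eq-j-ep=0}, valid for every $v$ with $\th+v\in\Th$; note that the conditional mean-field terms $l_{\rho_1},l_{\rho_2}$ have already disappeared because their $\cG_t$-conditional expectations vanish. Since the right-hand side of \eqref{eq-j-ep=0} is linear in the variational pair $(X^1_t,M^1_t)$, whose only dependence on the perturbation direction enters through the term $\si(X^\th_t)v_t$ in the drift of \eqref{eq-x1m1}, the idea is to use the adjoint system \eqref{eq0905a} to trade this dependence on $(X^1,M^1)$ for an explicit dependence on $v$.

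Concretely, I would apply It\^{o}'s formula to the duality pairing $P_tX^1_t+p_tM^1_t$, where $P_t$ is the adjoint attached to $X^1_t$ (both carry the $d\widetilde{W}_t$ martingale part) and $p_t$ to $M^1_t$ (both carry the $dY_t$ part). In the resulting drift, the coefficients $b'(X^\th_t)+\si'(X^\th_t)\th_t$ and $\si'(X^\th_t)$ of \eqref{eq-x1m1} cancel against the matching terms of the $dP_t$-drift once the cross-variation correction $\si'(X^\th_t)Q_tX^1_t\,dt$ from $\langle P,X^1\rangle$ is included; symmetrically, the $h,h'$ couplings in \eqref{eq-x1m1} cancel against the $dp_t$-drift together with the cross-variation $q_t\bigl(h(X^\th_t)M^1_t-h'(X^\th_t)M_tX^1_t\bigr)\,dt$; and the shared coupling $h'(X^\th_t)M_t\bigl(q_t+h(X^\th_t)p_t\bigr)X^1_t$ cancels between the two products. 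Only the control term and the two running-cost terms remain, so that
\[
d\bigl(P_tX^1_t+p_tM^1_t\bigr)=\bigl(\si(X^\th_t)P_tv_t+l_x(t)X^1_t+l_m(t)M^1_t\bigr)dt+(\text{mart.}).
\]
Integrating over $[0,T]$, taking $\tilde{\EE}$, and using $X^1_0=M^1_0=0$ and $p_T=P_T=0$ to kill the boundary terms, the left-hand side vanishes and, comparing with \eqref{eq-j-ep=0}, I obtain
\[
\frac{d}{d\ep}J(\th+\ep v)\Big|_{\ep=0}=-\tilde{\EE}\Bigl[\int_0^T\si(X^\th_t)P_tv_t\,dt\Bigr].
\]

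Finally I would invoke convexity of $\Th$: for any $\vartheta\in\Th$ the direction $v=\vartheta-\th$ satisfies $\th+\ep v=(1-\ep)\th+\ep\vartheta\in\Th$, so minimality of $\th$ forces the directional derivative to be nonnegative, giving
\[
\tilde{\EE}\Bigl[\int_0^T\si(X^\th_t)P_t(\vartheta_t-\th_t)\,dt\Bigr]\le 0\qquad\text{for every }\vartheta\in\Th.
\]
Hence $\th_t$ pointwise maximizes $\si(X^\th_t)P_t\vartheta$ over $|\vartheta|\le k$, which by a measurable-selection argument (as in the bang-bang step through Lemma B.1 of \citet{chen2002}) yields $\th_t=k\,\mbox{sgn}\bigl(\si(X^\th_t)P_t\bigr)=k\,\mbox{sgn}(P_t)$, using $\si\ge 0$ (the value on $\{\si(X^\th_t)=0\}$ being immaterial to the cost).

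The main obstacle is the duality computation of the second step: one must expand $P_tX^1_t+p_tM^1_t$ so that every term not proportional to $v$ --- in particular the coupling $h'(X^\th_t)M_t(q_t+h(X^\th_t)p_t)X^1_t$ common to both adjoint drifts, and the two It\^{o} cross-variation corrections --- cancels exactly against the forward coefficients, leaving only $\si(X^\th_t)P_tv_t$. This cancellation is precisely what the specific structure of the adjoint BSDE \eqref{eq0905a} is designed to produce, and checking it carefully is the crux of the argument.
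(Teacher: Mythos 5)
Your proposal is correct and takes essentially the same route as the paper: the same duality computation via It\^{o}'s formula on $P_tX^1_t+p_tM^1_t$ against the adjoint system \eqref{eq0905a}, killing the boundary terms with $p_T=P_T=0$ and $X^1_0=M^1_0=0$, comparing with \eqref{eq-j-ep=0}, and then using convexity of $\Th$ and minimality to obtain the variational inequality $\tilde{\EE}\bigl[\int_0^T\si(X^\th_t)P_t(\vartheta_t-\th_t)\,dt\bigr]\le 0$ and the bang-bang conclusion via $\si\ge 0$. Your explicit remark that the value of $\th_t$ on $\{\si(X^\th_t)=0\}$ is immaterial is a small refinement over the paper's statement, which asserts $\th_t=k\,\mbox{sgn}(P_t)$ without addressing that degenerate set.
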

	\begin{proof}
		Combined with \eqref{eq-lm}, adjoint processes $(p_t,q_t,P_t,Q_t)$  can be rewritten as follows:
		
		\begin{equation}\label{eq-pP}
			\left\{
			\begin{aligned}
				dp_t
				&=\big(l_m-h(X^\th_t)q_t\big)dt+q_tdY_t,\\[0.5em]
				dP_t
				&=\Big\{l_x-\big(b'(X^\th_t)-\si'(X^\th_t)\th_t\big)P_t-\si'(X^\th_t)Q_t+h'(X^\th_t)M_tq_t\\
				&\quad+h'(X^\th_t)h(X^\th_t)p_t\Big\}dt+Q_td\widetilde{W}_t,\\
				p_T&=0,\quad P_T=0.
			\end{aligned}
			\right.
		\end{equation}
		Then by \eqref{eq-pP} and \eqref{eq-x1m1}, it follows from It\^{o}'s formula that
		
		\[\left\{\begin{aligned}
			dp_tM_t^1&=\Big(l_mM^1_t-(h(X^\th_t)p_t+q_t)h'(X^\th_t)M_tX^1_t\Big)dt\\
			&\quad\,+\Big(M^1_tq_t+p_t\big(h(X^\th_t)M^1_t-h'(X^\th_t)M_tX^1_t\big)\Big)dY_t,\\
			dP_tX^1_t&=\Big(l_xX^1_t+(h(X^\th_t)p_t+q_t)h'(X^\th_t)M_tX^1_t+\si(X^\th_t)P_tv_t\Big)dt\\
			&\quad\,+X^1_t\big(Q_t+\si'(X^\th_t)\big)X^1_td\widetilde{W}_t.
		\end{aligned}\right.
		\]
		Taking integral on both sides of the above SDEs, we can obtain that
		
		\begin{align}\label{eq-pm1px1}
			\tilde{\EE}\Big[p_TM_T^1+P_TX^1_T\Big]=\tilde{\EE}\bigg[\int_0^T\Big(l_mM^1_t+l_xX^1_t+\si(X^\th_t)P_tv_t\Big)dt\bigg].
		\end{align}
		Recall that $\th_t$ is an optimal control that minimizes the cost functional \eqref{J-th} in the sense that for all $v$ satisfying $\th+\ep v\in \Th$ with $\ep\in [ 0,1)$, $J(\th+\ep v)$ attains its minimum at $\ep=0$. Plugging \eqref{eq-pm1px1} back into \eqref{eq-j-ep=0}, since
		
		\[
		\lim_{\ep\to0^+}\ep^{-1}(J(\th+\ep v)-J(\th))\ge 0,
		\]
		we derive that
		
		\[
		\tilde{\EE}\bigg[\int_0^T\big(\si(X^\th_t)P_tv_t\big)dt\bigg]\le 0.
		\]
		Note that there exist $\th^0\in \Th$ such that $v_t=\th^0_t-\th_t$. Thus,
		
		\[
		\tilde{\EE}\bigg[\int_0^T\big(\si(X^\th_t)P_t(\th^0_t-\th_t)\big)dt\bigg]\le 0.
		\]
		Therefore, to ensure that the above inequality holds, in virtue of the assumption that $\si\ge 0$, we must have $\th_t=k\mbox{ sgn}(P_t)$.	This marks the conclusion of the current proof, while simultaneously accomplishing the proof of Theorem \ref{thm1.2}.
	\end{proof}

	\bibliographystyle{abbrvnat}
\bibliography{reference}

\end{document}